\def\lag{{\textsc{lag}}}
\def\bm{}
\def\e{\epsilon}
\def\Var{\bm{Var}}
\def\argmin{\operatornamewithlimits{argmin}}
 \def\E{{\mathbb E}}
 \def\P{{\mathbb P}}
 \def\chi{{\mathbf 1}} 
  \def\w{\omega } \def\R{{\mathbb R}}
  \def\w{\omega } \def\Z{{\mathbb Z}}
\def\N{{\mathbb N}}  
\def\R{{\mathbb R}}
\def\N{{\mathbb N}}
\DeclarePairedDelimiter\abs{\lvert}{\rvert}%
\let\oldabs\abs
\def\abs{\@ifstar{\oldabs}{\oldabs*}}
\newtheorem{appl}{Application}
\newtheorem{ex}{Example}
\begin{document}
\title{Algorithm Portfolios for Noisy Optimization}
\author{Marie-Liesse Cauwet \and Jialin Liu \and Baptiste Rozi\`ere \and Olivier Teytaud}

\institute{TAO, INRIA-CNRS-LRI, Univ. Paris-Sud,\\91190 Gif-sur-Yvette, France\\
\email{firstname.lastname@lri.fr\\
https://tao.lri.fr\\
Tel: +33169157643\\
Fax: +33169156579
}
}
\maketitle

{
\begin{abstract}

Noisy optimization is the optimization of objective functions corrupted by noise. A portfolio of solvers is a set of solvers equipped with an algorithm selection tool for distributing the computational power among them. Portfolios are widely and successfully used in combinatorial optimization.

In this work, we study portfolios of noisy optimization solvers. We obtain mathematically proved performance (in the sense that the portfolio performs nearly as well as the best of its solvers) by an ad hoc portfolio algorithm dedicated to noisy optimization. A somehow surprising result is that it is better to compare solvers with some {\em{lag}}, i.e., 
{\color{black}propose the current recommendation of best solver based on their performance }
{\em{earlier}} in the run. An additional finding is a principled method for distributing the computational power among solvers in the portfolio.

\keywords{Black-Box Noisy Optimization \and Algorithm Selection \and Simple Regret}
\end{abstract}}

\section{Introduction}\label{intro}

Given an objective function, also termed fitness function, from a domain $\mathcal{D}\in \R^{d}$ to $\R$, numerical optimization or simply optimization is the research of points, also termed individuals or search points, with approximately optimum (e.g. minimum) objective function values.

Noisy optimization is the optimization of objective functions corrupted by noise. Black-box noisy optimization is the noisy counterpart of black-box optimization, i.e., functions for which no knowledge about the internal processes involved in the objective function can be exploited.

Algorithm Selection (AS) consists in choosing, in a portfolio of solvers, the one which is approximately the most efficient on the problem at hand. AS can mitigate the difficulties for choosing a priori the best solver among a portfolio of solvers. This means that AS leads to an adaptive version of the algorithms. In some cases, AS outperforms all individual solvers by combining the good properties of each of them, with information sharing or with chaining, as discussed later. It can also be used for the sake of parallelization or parameter tuning, or for mitigating the impact of bad luck in randomized solvers.
In this paper, we apply AS to the black-box noisy optimization problem.
{\color{black}This paper extends \cite{portfolio2} with respect to (i) showing that the {\em{lag}} is necessary; (ii) extending experimental results; (iii) improving the convergence rates thanks to an unfair distribution of the computational budget.
}

\subsection{Noisy optimization}\label{no}

Noisy optimization is a key component of machine learning, from supervised learning to unsupervised 
or reinforcement learning; it is also relevant in streaming applications. The black-box setting is crucial 
in reinforcement learning where gradients are difficult and expensive to get; direct policy search \cite{sutton98} 
usually boils down to (i) choosing a representation and (ii) applying black-box noisy optimization. 
{\color{black}Some works focus on noise models in which the noise has variance close to zero around the 
optimum \cite{augerupu}; others consider actuator noise, i.e., when the fitness values are those at search points corrupted by noise \cite{actuator}, as shown by Eq. \ref{actuat}.
{\color{black}There are also cases in which the landscape is assumed to be rugged, and the optimization algorithm should act as a low pass filter, in order to get rid of local variations \cite{staticnoise}.} The most usual model of noise is however the case of noise in which the variance does not vanish 
around the optimum and the goal is to find a good search point in terms of expected fitness value \cite{fabian,chen1988} 
and this is the case we will consider here. } 

Zero-order methods, including evolution strategies \cite{beyer:book2001} and derivative-free optimization \cite{conn97recent} are natural solutions in the black-box setting; as they do not use gradients, they are not affected in such a setting. However, the noise has an impact even on such methods \cite{stocopti5,noise3}, possibly mitigated by increasing the population size or averaging multiple evaluations of each search point. Using surrogate models \cite{noise2} reduces the impact of noise by sharing information over the domain. Surrogate models are also a step towards higher order methods; even in black-box scenarios: a Hessian can be approximated thanks to observed fitness values and statistical learning of a surrogate model.  

{\color{black}It is known \cite{fabian,shamir} }that stochastic gradient by finite differences (finite differences at each iteration or by averaging over multiple iterations) can provide tight convergence rates (see tightness in \cite{chen1988}) in the case of an additive noise with constant variance. {\color{black}Fabian, in \cite{fabian2},} has also tested the use of second order information (Hessian) approximated by finite differences. 

In this paper, our portfolio will be made of the following algorithms: 
(i) an evolution strategy;
(ii) a first-order method using gradients estimated by finite differences; 
(iii) a second-order method using a Hessian matrix also approximated by finite differences. 
We present these methods in more details in Appendix \ref{newton}. 

\subsection{Algorithm selection}
Combinatorial optimization is probably the most classical application domain for AS \cite{kotthoff}. However, machine learning is also a classical test case \cite{utgoff1988}; in this case, AS is sometimes referred to as meta-learning \cite{aha1992}. 

\paragraph{No free lunch.} \cite{Wolpert1997} claims that it is not possible to do better, on average (uniform average) on all optimization problems from a given finite domain to a given finite codomain. This implies that no AS can outperform existing algorithms on average on this uniform probability distribution of problems. Nonetheless, reality is very different from a uniform average of optimization problems, and AS does improve performance in many cases.

\paragraph{Chaining and information sharing.} Algorithm chaining \cite{borrettTsang1996} means switching from one solver to another during the AS run. More generally, a {\textit{hybrid}} algorithm is a combination of existing algorithms by any means \cite{vassilevska2006}. This is an extreme case of sharing. Sharing consists in sending information from some solvers to other solvers; they communicate in order to improve the overall performance.

\paragraph{Static portfolios \& parameter tuning.} A portfolio of solvers is usually static, i.e., combines a finite number of given solvers. {\color{black}SATzilla is probably the most well known portfolio method, combining several SAT-solvers \cite{xu2008satzilla}. Samulowitz and Memisevic have pointed out in \cite{samulowitz2007}  the importance of having ``orthogonal'' solvers in the portfolio, so that the set of solvers is not too large, but approximates as far as possible the set of all feasible solvers. AS and parameter tuning are combined in \cite{xuhydra2010}; parameter tuning can be viewed as an AS over a large but structured space of solvers. We refer to \cite{kotthoff} and references therein for more information on parameter tuning and its relation to AS; this is beyond the scope of this paper.}

\paragraph{Fair or unfair distribution of computation budgets.} In \cite{pulinaTacchella2009}, different strategies are compared for distributing the computation time over different solvers. 
The first approach consists in running all solvers during a finite time, then selecting the best performing one, and then keeping it for all the remaining time. Another approach consists in running all solvers with the same time budget independently of their performance on the problem at hand. Surprisingly enough, they conclude that uniformly distributing the budget is a good and robust strategy. The situation changes when a training set is available, and when we assume that the training set is relevant for the future problems to be optimized; \cite{Kadioglu2011}, using a training set of problems for comparing solvers, proposes to use 90\% of the time allocated to the best performing solver, the other 10\% being equally distributed among other solvers. {\color{black}In \cite{gaglioloSchmidhuber2005,gaglioloSchmidhuber2006b}, it is proposed to use} 50\% of the time budget for the best solver, 25\% for the second best, and so on. Some AS algorithms \cite{gaglioloSchmidhuber2006b,armstrong2006} do not need a separate training phase, and perform entirely online solver selection; a weakness of this approach is that it is only possible when a large enough budget is available, so that the training phase has a minor cost. {\color{black}A portfolio algorithm, namely Noisy Optimization Portfolio Algorithm (NOPA), designed for noisy optimization solvers, and which distributes uniformly the computational power among them, is proposed in \cite{portfolio2}. We extend it to INOPA (Improved NOPA), which is allowed to distribute the budget in an unfair manner.} It is proved that INOPA reaches the same {\color{black}convergence rate} as the best solver, within a small (converging to $1$) multiplicative factor on the number of evaluations, when there is a unique optimal solver - thanks to a principled distribution of the budget into (i) running all the solvers; (ii) comparing their results; (iii) running the best performing one. The approach is anytime, in the sense that the computational budget does not have to be known in advance.

\paragraph{Parallelism.} 
We refer to \cite{hamadi2013combinatorial} for more on parallel portfolio algorithms (though not in the noisy optimization case).
Portfolios can naturally benefit from parallelism; however, the situation is different in the noisy case, which is highly parallel by nature (as noise is reduced by averaging multiple resamplings\footnote{{``Resamplings'' means that the stochastic objective function, also known as fitness function, is evaluated several times at the same search point. This mitigates the effects of noise.}}). 

\paragraph{Best solver first.} \cite{pulinaTacchella2009} point out the need for a good ordering of solvers, even if it has been decided to distribute nearly uniformly the time budget among them{\color{black}: this improves the anytime behavior}. For example, they propose, within a given scheduling with same time budget for each optimizer, to use first the best performing solver. We will adapt this idea to our context; this leads to INOPA, improved version of NOPA.

\paragraph{Bandit literature.} During the last decade, a wide literature on bandits \cite{lairobbins,Auer03,BubeckE09} has proposed many tools for distributing the computational power over stochastic options to be tested. The application to our context is however far from being straightforward. In spite of some adaptations to other contexts (time varying as in \cite{kocsis} or adversarial \cite{grigoriadis,auer95gambling}), and maybe due to strong differences such as the very non-stationary nature of bandit problems involved in optimization portfolios, these methods did not, for the moment, really find their way to AS. 
Another approach consists in writing this bandit algorithm as a meta-optimization problem; \cite{stpierre:hal-00979456} applies the differential evolution algorithm \cite{referenceOnDE} to some non-stationary bandit problem, which outperforms the classical bandit algorithm on an AS task.

{The main contributions of this paper can be summarized as follows. First, we prove positive results for a portfolio algorithm, termed NOPA, for AS in noisy optimization. Second, we design a new AS, namely INOPA, which (i) gives the priority to the best solvers when distributing the computational power; (ii) approximately reaches the same performance as the best solver; 
(iii) possibly shares information between the different solvers. We then prove the requirement of selecting the solver that was apparently the best {\textit{some time before}} the current iteration - a phenomenon that we term the {\em{lag}}. Finally, we provide some experimental results.}

\subsection{Outline of this paper}
Section \ref{portfolio} describes the algorithms under consideration and provides theoretical results.
Section \ref{xp} is dedicated to experimental results.
Section \ref{sec:conc} concludes.

\section{Algorithms and analysis}\label{portfolio}
Section \ref{notation} introduces some notations.
Section \ref{definition} provides some background and criteria.
Section \ref{descriptionofportfolioalgorithm} describes two portfolio algorithms, one with fair distribution of budget among solvers and one with unfair distribution of budget.
Section \ref{theorie} then provides theoretical guarantees.

\subsection{Notations}\label{notation}

{\color{black}In this paper, $\N^*=\{1,2,3,\dots\}$, ``a.s.'' stands for ``almost surely'', i.e., with probability 1, and ``s.t.'' stands for ``such that''. Appendix \ref{ion} is a summary of notations.}
If $X$ is a random variable, then $(X^{(1)},X^{(2)},\dots)$ denotes a sample of independent identically distributed random variables, copies of $X$. {\color{black}$o(.),O(.),\Theta(.)$ are the standard Landau notations. {$\cal N$ denotes a standard Gaussian distribution, in dimension given by the context.}

Let $f : \cal{D}\rightarrow \R$ be a noisy function. 
$f$ is a random process, and equivalently it can be viewed as a mapping $(x,\w)\mapsto f(x,\w)$ where $x\in \cal{D}$ and $\w$ is a random variable independently sampled at each call to $f$. The user can only choose $x$. For short, we will use the notation $f(x)$. The reader should keep in mind that this function is stochastic.
{$\hat{\E}_{s}\left[f(x)\right]$ denotes the empirical evaluation of $\E\left[f(x)\right]$ over $s\in\N^*$ resamplings, i.e., $\hat{\E}_{s}\left[f(x)\right]=\frac{1}{s}\sum_{j=1}^{s}\left(f(x)\right)^{(j)}$.}

\subsection{Definitions and Criteria}\label{definition}
A black-box noisy optimization solver, here referred to as a solver, is a program which aims at finding the minimum $x^*$ of $x\mapsto \E f(x)$, thanks to multiple black-box calls to the unknown function $f$. The expectation operator $\E$ shows that we assume that the noise is additive and unbiased (Eq. \ref{additivenoise}); the ``real'', noise-free, fitness function is the expectation of the noisy fitness function. This is not necessarily the case for e.g. actuator noise, as in Eq. \ref{actuat}:
\begin{eqnarray}
f(x,\w)=f(x)+\w;\label{additivenoise}\\
f(x,\w)=f(x+\w).\label{actuat} 
\end{eqnarray}

The portfolio algorithm has the same goal, and can use $M\in\{2,3,\dots\}$ different given solvers. A good AS tool should ensure that it is nearly as efficient as the best of the individual solvers\footnote{{A solver is termed ``individual solver'' when it is not a portfolio. In this paper, unless stated otherwise, a ``solver'' is an ``individual solver''.}}, for any problem in some class of interest.

\paragraph{Simple regret criterion.}
In the black-box setting, let us define :
\begin{itemize}
	\item $x_n$ the $n^{th}$ search point at which the objective function (also termed fitness function) is evaluated;
	\item $\tilde x_n$ the point that the solver recommends as an approximation of the optimum after having evaluated the objective function at $x_1,\dots,x_n$ (i.e., after spending $n$ evaluations from the budget).
\end{itemize}
Some algorithms make no difference between $x_n$ and $\tilde x_n$, but in the general case of a noisy optimization setting the difference matters \cite{CLOP,fabian,shamir}.

The Simple Regret (SR) for noisy optimization is expressed in terms of objective function values, as follows:
\begin{equation}
SR_n=\E \left( f(\tilde x_n)-f(x^*) \right).\label{eq:eqsr} 
\end{equation}
$SR_n$ is the simple regret after $n$ evaluations; $n$ is then the budget.
The $\E$ operator refers to the $\w$ part, i.e., with complete notations,
$$SR_n=\E_\w \left( f(\tilde x_n,\w)-f(x^*,\w) \right). $$

{
In many cases, it is known that the simple regret has a linear convergence in a $\log$-$\log$ scale \cite{fabian,chen1988,CLOP}. Therefore we will consider this slope.
The slope of the simple regret is then defined as
\begin{equation}
s(SR) = \underset{n\rightarrow \infty}{\lim} \frac{\log(SR_n)}{\log (n)},\label{eqssr}
\end{equation}}
{\color{black} where the limit holds almost surely, since $SR_n$ is a random variable.} 

For example, the gradient method proposed in \cite{fabian} (approximating the gradient by finite differences) 
reaches a simple regret slope arbitrarily close to $-1$ on sufficiently smooth problems, for an additive centered noise, without assuming variance decreasing to zero around the optimum.

\paragraph{Simple regret criterion for portfolio.}
For a portfolio algorithm in the black-box setting, $\forall i \in \{1,\dots,M\}$, $\tilde{x}_{i,n}$ denotes the point 
\begin{itemize}
	\item that the solver number $i$ {\em{recommends}} as an approximation of the optimum;
	\item after this solver has spent $n$ evaluations from the budget.
\end{itemize}

Similarly, the simple regret given by Equation \ref{eq:eqsr} corresponding to solver number $i$ after $n$ evaluations (i.e., after solver number $i$ has spent $n$ evaluations), is denoted by $SR_{i,n}$. 
For $n\in \N^*$, $i^*_n$ denotes the solver chosen by the selection algorithm when there are at most $n$ evaluations per solver.\footnote{This is not uniquely defined, as there might be several time steps at which the maximum number of evaluations in a solver is $n$; however, the results in the rest of this paper are independent of this subtlety.}

Another important concept is the {\color{black}difference between the two kinds of terms in the regret of the portfolio. We distinguish these two kinds of terms in the next two definitions:}

\begin{definition}[Solvers' regret]
The solvers' regret with index $n$, denoted by $SR_{n}^{Solvers}$, is the minimum simple regret among the solvers after at most $n$ evaluations each, i.e., $SR_{n}^{Solvers}=\underset{i\in \{1,\dots M\}}{\min} SR_{i,n}$.
\end{definition}

\begin{definition}[Selection regret]\label{def:selsr}
The selection regret with index $n$, denoted by $SR_{n}^{Selection}$ includes the additional regret due to mistakes in choosing among these $M$ solvers after at most $n$ evaluations each, i.e., $SR_{n}^{Selection}=\E \left( f(\tilde x_{i^{*}_{n},n})-f(x^*) \right)$. 
\end{definition}

Similarly, $\Delta_{i,n}$ quantifies the regret for choosing solver $i$ at iteration $n$.
\begin{definition}
For any solver $i \in \{1,\dots,M\}$ and any number of evaluations $n\in \N^{*}$, we denote by $\Delta_{i,n}$ the quantity: $\Delta_{i,n} = SR_{i,n}- \underset{j \in \{1,\dots,M\}}\min SR_{j,n}$.
\end{definition}

Finally, we consider a function that will be helpful for defining our portfolio algorithms.

\begin{definition}[$\lag$ function]\label{def:lag}
A lag function $\lag : \N^*\rightarrow\N^*$ is a non-decreasing function
such that for all $n\in \N^*$, $\lag(n)\leq n$.
\end{definition}

\subsection{Portfolio algorithms}\label{descriptionofportfolioalgorithm}
In this section, we present two AS methods. A first version, in Section \ref{nopaunif}, shares the computational budget uniformly; a second version has an unfair sharing of computation budget, in Section \ref{inopa}.

\subsubsection{Simple Case : Uniform Portfolio NOPA}\label{nopaunif}
We present in Algorithm \ref{algo:nopa} a simple noisy optimization portfolio algorithm (NOPA) which does not apply any sharing and distributes the computational budget equally over the noisy optimization solvers. 
\begin{algorithm}
	{\color{black}
\begin{algorithmic}[1]
\Require {noisy optimization solvers $Solver_1, Solver_2,\dots, Solver_M$}
\Require {a {\em{lag}} function $\lag: \N^{*} \mapsto \N^{*}$} \Comment{As in Definition \ref{def:lag}}
\Require {a non-decreasing integer sequence $r_1, r_2, \dots$ \Comment{Periodic comparisons}}
{\color{black}\Require {a non-decreasing integer sequence $s_1, s_2, \dots$} \Comment{Number of resamplings}}
\State{$n\leftarrow 1$} \Comment{Number of selections}
\State{$m\leftarrow 1$}	 \Comment{NOPA's iteration number}
\State{$i^*\leftarrow null$}	 \Comment{Index of recommended solver}
\State{$x^*\leftarrow null$}	 \Comment{Recommendation}
\While{budget is not exhausted}
	\If{$m \geq r_n$}
		\State{$i^{*}=\underset{i \in \{1,\dots,M\}}{\arg\min} \hat \E_{s_n}[f(\tilde{x}_{i,\lag(r_{n})})]$}\label{line:comparison} \Comment{Algorithm selection}
    \State{$n\leftarrow n+1$}
	\Else
		\For{$i \in \{1,\dots,M\}$}\label{line:beginrun}
			\State{Apply one evaluation for $Solver_i$}
		\EndFor\label{line:endrun}
		\State{$m\leftarrow m+1$}
	\EndIf
	\State{$x^*=\tilde x_{i^{*},m}$} \Comment{Update recommendation}
\EndWhile
\end{algorithmic}}
\caption{\label{algo:nopa} Noisy Optimization Portfolio Algorithm (NOPA).}
\end{algorithm} 

\def\nouse{
\begin{figure*}
\centering
\subfigure[Evaluate each solver which received less than $r_n$ evaluations until they all have at least $r_n$ evaluations. Make a selection based on the evaluations at iteration $\lag(r_n)$, e.g., assuming $Solver_2$ is selected (yellow circle), recommend $\tilde x_{2,r_n}$ (red square).]{\includegraphics[width=.45\textwidth]{nopa1-pics-crop.pdf}}
\hspace{4mm}
\subfigure[Evaluate each solver which received less than $r_{n+1}$ evaluations. Make a selection based on the evaluations at iteration $\lag(r_{n+1})$, e.g., assuming $Solver_3$ is selected (yellow circle), recommend $\tilde x_{3,r_{n+1}}$ (red square).]{\includegraphics[width=.45\textwidth]{nopa2-pics-crop.pdf}} 
\caption{\label{fig:nopa}An example of NOPA with 4 solvers until reaching iteration $r_n$ (a) and iteration $r_{n+1}$ (b).}
\end{figure*}
}

{In this NOPA algorithm, we compare, at iteration $r_n$, recommendations chosen at iteration $\lag(r_n)$, and this comparison is based on $s_n$ resamplings, where $n$ is the number of algorithm selection steps. 
	{\color{black}We have designed the algorithm as follows:}
\begin{itemize}
{\color{black}
	\item {\bf{A stable choice of solver:}} The selection algorithm follows the recommendation of the same solver $i^*$ at all iterations in $\{r_n,\dots,r_{n+1}-1\}$. This choice is based on comparisons between old recommendations (through the {\em{lag}} function $\lag$).
	\item {\bf{The chosen solver updates are taken into account.}} For iteration indices $m <p$ in $\{r_n,\dots,r_{n+1}-1\}$, the portfolio chooses the same solver $i^*$, but does not necessarily recommends the same point because possibly the solver changes its recommendation, i.e., possibly $\tilde x_{i^*,m}\neq \tilde x_{i^*,p}$.}
\end{itemize}}

\paragraph{Effect of the lag.}
{\color{black}{Due to the $\lag(.)$ function, we compare the $\tilde x_{i,\lag(r_n)}$ (for $i\in \{1,\dots,M\}$), and not the
		$\tilde x_{i,r_n}$. This is the key point of this algorithm. Comparing the $\tilde x_{i,\lag(r_n)}$ is much cheaper than comparing the $\tilde x_{i,r_n}$, because the fitness values are not yet that good at iteration $\lag(r_n)$, so they can be compared faster - i.e., with less evaluations - than recommendations at iteration $r_n$. We will make this more formal in Section \ref{theorie}, and see under which assumptions this lag has more pros than cons, namely when algorithms have somehow sustained rates. {\color{black}In addition, with lag, we can define INOPA, which saves up significant parts of the computation time.}

The first step for formalizing this is to understand the two different kinds of evaluations in portfolio algorithms for noisy optimization. Contrarily to noise-free settings, comparing recommendations requires a dedicated budget, which is far from negligible.}}
It follows that there are two kinds of evaluations:
\begin{itemize}
	\item {\bf{Portfolio budget ({\color{black}Algorithm \ref{algo:nopa}, Lines \ref{line:beginrun}-\ref{line:endrun}}):}} this corresponds to the $M$ evaluations per iteration, dedicated to running the $M$ solvers (one evaluation per solver and per iteration).
	\item {\bf{Comparison budget ({\color{black}Algorithm \ref{algo:nopa}, Line \ref{line:comparison}}):}} this corresponds to the $s_n$ evaluations per solver at the $n^{th}$ algorithm selection. This is a key difference with deterministic optimization. In deterministic optimization, this budget is zero as the exact fitness value is readily available.
\end{itemize}

We have $M\cdot r_n$ evaluations in the portfolio budget for the first $r_n$ iterations. We will see below (Section \ref{theorie}) conditions under which the other costs (i.e. comparison costs) can be made negligible, whilst preserving the same regret as the best of the $M$ solvers.

\subsubsection{INOPA: Improved NOPA, with unequal budget}\label{inopa}
Algorithm \ref{algo:inopa} proposes a variant of NOPA, which distributes the budget in an unfair manner. 
The solvers with good performance receive a greater budget. The algorithm is designed so that it mimics the behavior of NOPA, but without spending the evaluations which are useless for the moment, given the lag - i.e. we use the fact that evaluations prior to the lagged index are useless except for the selected algorithm.

\begin{algorithm}
	{\color{black}
\begin{algorithmic}[1]
\Require {noisy optimization solvers $Solver_1, Solver_2,\dots, Solver_M$}
\Require {a {\em{lag}} function $\lag: \N^{*} \mapsto \N^{*}$} \Comment{Refer to Definition \ref{def:lag}}
\Require {a non-decreasing positive integer sequence $r_1, r_2, \dots$}{\color{black}\Comment{Periodic comparisons}}
{\color{black}\Require {a non-decreasing integer sequence $s_1, s_2, \dots$} \Comment{Number of resamplings}}
\State{$n\leftarrow 1$} \Comment{Number of selections}
\State{$m\leftarrow 1$}	 \Comment{NOPA's iteration number}
\State{$i^*\leftarrow null$}	 \Comment{Index of recommended solver}
\State{$x^*\leftarrow null$}	 \Comment{Recommendation}
\While{budget is not exhausted}
	\If{$m \geq \lag(r_n)$ or $i^*=null$}
		\State{$i^{*}=\underset{i \in \{1,\dots,M\}}{\arg\min} \hat \E_{s_n}[f(\tilde{x}_{i,\lag(r_{n})})]$} \Comment{Algorithm selection}
		\State{$m'\leftarrow r_n$}
		\While{$m' < r_{n+1}$}
			\State{Apply one evaluation to solver $i^*$}
			\State{$m'\leftarrow m'+1$}
			\State{$x^*=\tilde x_{i^{*},m'}$} \Comment{Update recommendation}
		\EndWhile
		\State{$n\leftarrow n+1$}
	\Else
		\For{$i \in \{1,\dots,M\}\backslash i^*$}
			\State{Apply $\lag(r_n) - \lag(r_{n-1})$ evaluations for $Solver_i$}
		\EndFor
		\State{$m\leftarrow m+1$}
	\EndIf
\EndWhile
\end{algorithmic}
}
\caption{\label{algo:inopa} Improved Noisy Optimization Portfolio Algorithm (INOPA).}
\end{algorithm} 
\def\nouse{
\begin{figure*}
\centering
\subfigure[Evaluate each solver which received less than $\lag(r_1)$ evaluations, until they all have at least $\lag(r_1)$ evaluations. Make a selection by comparing recommendations obtained by solvers at the end of their first $\lag(r_1)$ evaluations (yellow circle).]{\includegraphics[width=.45\textwidth]{inopa1-pics-crop.pdf}}
\hspace{4mm}
\subfigure[Evaluate the best solver, e.g. possibly $Solver_2$, until it has received $r_1$ evaluations, and recommend $\tilde x_{2,r_1}$ (red square).]{\includegraphics[width=.45\textwidth]{inopa2-pics-crop.pdf}} \\
\subfigure[Evaluate each solver which received less than $\lag(r_2)$ evaluations. Make a selection at iteration $\lag(r_2)$ (yellow circle).]{\includegraphics[width=.45\textwidth]{inopa3-pics-crop.pdf}}
\hspace{4mm}
\subfigure[Evaluate the best solver, e.g. supposing $Solver_3$, until receiving $r_2$ evaluations, recommend $\tilde x_{3,r_2}$ (red square).]{\includegraphics[width=.45\textwidth]{inopa4-pics-crop.pdf}}
\caption{\label{fig:inopa}An example of INOPA with 4 solvers during the first and the second selection. For algorithm selection time $>2$, repeat the steps (b)-(d).}
\end{figure*}
}

\def\oldalgo{
\begin{algorithm}
\caption{\label{algo:inopa} Improved Noisy Optimization Portfolio Algorithm (INOPA).}
{\bf{Iteration 1:}} one evaluation for solver $1$, one evaluation for solver $2$, \dots, one evaluation for solver $M$.\\
{\bf{Iteration 2:}} one evaluation for solver $1$, one evaluation for solver $2$, \dots, one evaluation for solver $M$.\\

\makebox[\linewidth][c]{$\smash{\vdots}$\ \ \ $\smash{\vdots}$\ \ \ $\smash{\vdots}$ }\\

{\bf{Iteration $\lag(r_1)$:}} one evaluation for solver $1$, one evaluation for solver $2$, \dots, one evaluation for solver $M$.\\
{\fbox{\bf{Algorithm Selection}}} Evaluate $X=\{\tilde x_{1,\lag(r_1)},\dots,\tilde x_{M,\lag(r_1)}\}$, each of them $s_1$ times; the AS selects {$i^{*}=\underset{i \in \{1,\dots,M\}}{\argmin}\hat \E_{s_1}[f(\tilde{x}_{i,\lag(r_{1})})]$}. Evaluate solver $i^{*}$ until it has $r_1$ evaluations. For $m \in \{r_1,\dots,r_2-1\}$, the recommendation of the selection algorithm is $\tilde x_{i^*,m}$ i.e., $i^*_m=i^*$.\\
{\bf{Iteration $\lag(r_1)+1$:}} one evaluation for each solver which received less than $\lag(r_2)$ evaluations.\\

\makebox[\linewidth][c]{$\smash{\vdots}$\ \ \ $\smash{\vdots}$\ \ \ $\smash{\vdots}$ }\\

{\bf{Iteration $\lag(r_2)-1$:}} one evaluation for each solver which received less than $\lag(r_2)$ evaluations.\\
{\bf{Iteration $\lag(r_2)$:}} one evaluation for each solver which received less than $\lag(r_2)$ evaluations.\\
 {\fbox{\bf{Algorithm Selection}}} Evaluate $X=\{\tilde x_{1,\lag(r_2)},\dots,\tilde x_{M,\lag(r_2)}\}$, each of them $s_2$ times; the AS selects {$i^*=\underset{i \in \{1,\dots,M\}}{\arg\min} \hat \E_{s_2}[f(\tilde{x}_{i,\lag(r_{2})})]$}. Evaluate solver $i^{*}$ until it has $r_2$ evaluations. For $m \in \{r_2,\dots,r_3 -1\}$, the recommendation of the selection algorithm is $\tilde x_{i^*,m}$, i.e., $i^*_m=i^*$.\\

\makebox[\linewidth][c]{$\smash{\vdots}$\ \ \ $\smash{\vdots}$\ \ \ $\smash{\vdots}$ }\\

{\bf{Iteration $\lag(r_n)$:}} one evaluation for each solver which received less than $\lag(r_n)$ evaluations.\\
{\fbox{\bf{Algorithm Selection}}} Evaluate $X=\{\tilde x_{1,\lag(r_n)},\dots,\tilde x_{M,\lag(r_n)}\}$, each of them $s_n$ times; the AS selects {$i^{*}=\underset{i \in \{1,\dots,M\}}{\arg\min} \hat \E_{s_n}[f(\tilde{x}_{i,\lag(r_{n})})]$}. Evaluate solver $i^{*}$ until it has $r_n$ evaluations. For $m \in \{r_{n},\dots,r_{n+1}-1\}$, the recommendation of the selection algorithm is $\tilde x_{i^*,m}$, i.e., $i^*_m=i^*$.\\
\end{algorithm}
}

\subsection{Theoretical analysis}\label{theorie}
We here show 
\begin{itemize}
	\item a bound on the performance of NOPA (Section \ref{sec:logMshift});
	\item a bound on the performance of INOPA (Section \ref{logmp});
	\item that the {\em{lag}} term is necessary (Section \ref{neclag}).
\end{itemize}

\subsubsection{Preliminary}\label{sec:preliminary}
We define 2 extra properties which are central in the proof.

\begin{definition}[${\bf{P_{as}^{(i)}((\epsilon_n)_{n\in \N^*})}}$]\label{hpsi}
For any solver $i\in \{1,\dots,M\}$, for some positive sequence $(\e_n)_{n\in \N^{*}}$, we define ${\bf{P_{as}^{(i)}((\epsilon_n)_{n\in \N^*})}}$:
\begin{equation*}
{{P_{as}^{(i)}((\epsilon_n)_{n\in \N^*})}}:
a.s.\quad  \exists n_0,~\forall n_1 \ge n_0,~\Delta_{i,n_1}< 2 \epsilon_{n_1} \implies \forall n_2 \ge n_1,~\Delta_{i,n_2}<2\epsilon_{n_2}.
\end{equation*}
\end{definition}

Informally speaking, if ${\bf{P_{as}^{(i)}((\epsilon_n)_{n\in\N^*})}}$ is true, then almost surely for a large enough number of evaluations, the difference between the simple regret of solver $i\in \{1,\dots,M\}$ and the optimal simple regret is either always at most $2\epsilon_n$ or always larger - there is no solver infinitely often alternatively at the top level and very weak.

\begin{definition}[${\bf{P_{as}((\epsilon_n)_{n\in \N^*})}}$]\label{hps}
For some positive sequence $(\e_n)_{n\in \N^{*}}$, we define ${\bf{P_{as}((\epsilon_n)_{n \in \N^{*}})}}$ as follows:
\begin{equation*}
\forall i \in \left\{1,\dots,M \right\},\ {\bf{P_{as}^{(i)}((\epsilon_n)_{n \in \N^{*}})}}\text{ holds}.
\end{equation*}
\end{definition}

\begin{remark}
In Definitions \ref{hpsi} and \ref{hps}, we might choose slightly less restrictive definitions, for which the inequalities only hold for integers $n$ such that $\exists i$, $\lag(r_i)=n\mbox{ or }r_i=n$.
\end{remark}

Definitions above can be applied in a very general setting.
The simple regret of some noisy optimization solvers, for instance Fabian's algorithm, 
is almost surely $SR_{n}\leq (1+ o(1)) \frac {C} {n^{\alpha}}$ after $n\in \N^*$ evaluations ($C$ is a constant), 
for some constant $\alpha>0$ arbitrarily close to $1$. {\color{black}This result is proved in \cite{fabian}, with 
optimality proved in \cite{chen1988}.} For RSAES, introduced below, the proof is given in \cite{loglog}. For noisy variants of Newton's algorithm, one can refer to \cite{spall00adaptive,spall09feedback,tcscauwet}.

We prove the following proposition for such a case; it will be convenient for illustrating ``abstract'' general results to standard noisy optimization frameworks.

\begin{proposition}\label{example} \label{proposition}
Assume that each solver $i \in \{1,\dots,M\}$ has almost surely simple regret $(1+ o(1)) \frac {C_i} {n^{\alpha_i}}$ after $n\in \N^*$ evaluations.

We define $C$, $\alpha^*$, $C^*$:
\begin{equation}
C= \frac 1 3 \min \left\{|C_i - C_j| \mid 1 \le i,j \le M; C_i-C_j \ne 0 \right\}. \label{eq:defc}
\end{equation}
\begin{equation}
	\alpha^*= \underset{i \in \{1,\dots,M\}}\max \alpha_i  \label{eq:bestalpha}.
\end{equation}
\begin{equation}
	C^* = \underset{i \in \{1,\dots,M\} \ s.t.\ \alpha_i = \alpha^* }\min C_i.   \label{eq:bestc}
\end{equation}

We also define the set of optimal solvers:
\begin{eqnarray}
	SetOptim&=&\{i \in \{1,\dots,M\} \vert \alpha_i = \alpha^*\}\nonumber\\
	\mbox{ and }SubSetOptim &=& \{i^* \in SetOptim \vert C_{i^*} = C^* \}\label{ssopt}\\
	&=&\{i \in \{1,\dots,M\} \vert \alpha_i = \alpha^*\text{ and } C_{i} = C^* \}\label{subsetoptim}.
\end{eqnarray}

With these notations, if almost surely, $\forall i \in \{1, \dots, M\}$, the simple regret for solver $i$ after $n\in \N^*$ evaluations is $SR_{i,n}= (1+ o(1)) \frac {C_i} {n^{\alpha_i}}$, then ${\bf{P_{as}((\epsilon_n)_{n \in \N^{*}})}}$ is true with $\e_n$ defined as follows:
\begin{equation}
\epsilon_n= \frac C { n^{\alpha^*}}  \label{eq:epsilon}.
\end{equation}

Moreover, if $ i_0 \in \{1,\dots,M\}$ satisfies: $\left(\exists n_0\in \N^*,\ \forall n \geq n_0,\ \Delta_{i_0,n}\leq 2\e_n\right)$, then $i_0 \in SubSetOptim$.
\end{proposition}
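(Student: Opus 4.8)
The plan is to reduce everything to the asymptotics of $SR_{i,n}$, of $\min_j SR_{j,n}$, and of $\epsilon_n=C/n^{\alpha^*}$, treating each solver separately. All the manipulations take place on the probability-$1$ event on which, simultaneously for all $i$, $SR_{i,n}=(1+u_i(n))C_i/n^{\alpha_i}$ with $u_i(n)\to 0$; from here on ``eventually'' means ``for every $n$ large enough'' on that event, and I write $i\in SubSetOptim$ for the indices with $\alpha_i=\alpha^*$ and $C_i=C^*$.

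First I would identify the best solver. For large $n$ any $j$ with $\alpha_j<\alpha^*$ has $SR_{j,n}=\omega(n^{-\alpha^*})$ and thus cannot attain $\min_j SR_{j,n}$; among the remaining indices (those in $SetOptim$) one has $SR_{j,n}/SR_{k,n}\to C_j/C_k$, and by the definition of $C$ in \refp{eq:defc} two distinct constants differ by at least $3C>0$, so eventually the minimum is attained inside $SubSetOptim$. Since each $i\in SubSetOptim$ has $SR_{i,n}=(1+u_i(n))C^*/n^{\alpha^*}$, this yields $\min_j SR_{j,n}=(1+o(1))C^*/n^{\alpha^*}$.

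Next I would split on the solver. If $i\in SubSetOptim$ then $\Delta_{i,n}=(1+u_i(n))C^*/n^{\alpha^*}-(1+o(1))C^*/n^{\alpha^*}=o(1/n^{\alpha^*})=o(\epsilon_n)$, so there is $n_0$ with $\Delta_{i,n}<2\epsilon_n$ for all $n\ge n_0$; then for every $n_1\ge n_0$ both sides of the implication defining $P_{as}^{(i)}((\epsilon_n))$ are true, so $P_{as}^{(i)}((\epsilon_n))$ holds. If instead $i\notin SubSetOptim$, then either $\alpha_i<\alpha^*$, in which case $\Delta_{i,n}=(1+o(1))C_i/n^{\alpha_i}$ (the $\min_j SR_{j,n}$ term being negligible) and $\Delta_{i,n}/\epsilon_n=(1+o(1))(C_i/C)\,n^{\alpha^*-\alpha_i}\to+\infty$; or $\alpha_i=\alpha^*$ with $C_i\ne C^*$, hence $C_i\ge C^*+3C$ and $\Delta_{i,n}=(1+o(1))(C_i-C^*)/n^{\alpha^*}$, so $\Delta_{i,n}/\epsilon_n\to(C_i-C^*)/C\ge 3$. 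In both sub-cases $\Delta_{i,n}>2\epsilon_n$ eventually, so the premise ``$\Delta_{i,n_1}<2\epsilon_{n_1}$'' of the implication fails for all large $n_1$ and $P_{as}^{(i)}((\epsilon_n))$ holds vacuously. Quantifying over $i$ gives $P_{as}((\epsilon_n))$, and the ``moreover'' claim is the contrapositive of the non-optimal case: if $\Delta_{i_0,n}\le 2\epsilon_n$ for all $n\ge n_0$, then $i_0$ is in neither sub-case, i.e. $i_0\in SubSetOptim$.

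The hard part will be the bookkeeping in the first step: the $u_i(n)$ are distinct $o(1)$ sequences, so one must first argue that $\min_j SR_{j,n}$ is eventually realised only by indices in $SubSetOptim$ before collapsing it to $(1+o(1))C^*/n^{\alpha^*}$; and one should flag the degenerate situation in which all the $C_i$ coincide and the $\min$ in \refp{eq:defc} is over the empty set — there $SubSetOptim=SetOptim$, any fixed $C>0$ works, and the argument is unchanged. Everything else is elementary comparison of powers of $n$.
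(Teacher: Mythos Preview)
Your proof is correct and follows essentially the same route as the paper's: both reduce to the three-way case split $i\in SubSetOptim$, $i\in SetOptim\setminus SubSetOptim$, $i\notin SetOptim$, and in each case compute the limit of $\Delta_{i,n}/\epsilon_n$ (respectively $0$, $\ge 3$, $+\infty$) to verify $P_{as}^{(i)}((\epsilon_n))$ and the ``moreover'' clause. The only cosmetic difference is that the paper works with $SR_{i,n}-SR_{i^*,n}$ for a fixed $i^*\in SubSetOptim$ rather than first isolating $\min_j SR_{j,n}=(1+o(1))C^*/n^{\alpha^*}$ as you do, and your remark on the degenerate case where all $C_i$ coincide (so the minimum in \refp{eq:defc} is empty) is a useful addition that the paper does not address.
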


Informally speaking, this means that if the solver $i_0$ is close, in terms of simple regret, to an optimal solver (i.e., a solver matching $\alpha^*$ and $C^*$ in Equations {\ref{eq:bestalpha} and \ref{eq:bestc}), then it also has an optimal slope ($\alpha_{i_0}=\alpha^*$) and an optimal constant ($C_{i_0}=C^{*}$). 

\begin{proof}
For any solver $i \in \{1,\dots,M\}$ and any solver $i^* \in SubSetOptim$,
\begin{equation}
SR_{i,n} -SR_{i^*,n} = (1+o(1))\frac {C_i} {n^{\alpha_i}} - (1+o(1))\frac {C^*} {n^{\alpha^{*}}}.	\label{eq:diffsr}
\end{equation}
By Equations \ref{eq:epsilon} and \ref{eq:diffsr},
\begin{equation}
\frac {SR_{i,n} -SR_{i^*,n}} {\epsilon_n} = \frac { C_i} C \cdot n^{\alpha^* - \alpha_i} \cdot(1+o(1)) - \frac {C^*} C \cdot(1+o(1)). \label{eq:ce}
\end{equation}

\begin{itemize}
\item If $i \not \in SetOptim$, i.e., $\alpha_i < \alpha^{*}$, the first term in Equation \ref{eq:ce} tends to $\infty$, which leads to
\begin{equation}
\underset{\alpha_i < \alpha^{*},n \rightarrow \infty}\lim \frac {SR_{i,n} -SR_{i^*,n}} {\epsilon_n} = \infty.\nonumber
\end{equation}
So for all $i \not \in SetOptim$, $\exists n_0\in \N^*$ s.t. $\forall n \ge n_0$, $\Delta_{i,n} = SR_{i,n} - \underset{j \in \{1,\dots,M\}}\min SR_{j,n} > 2 \epsilon_n$ and, therefore, ${\bf{P_{as}^{(i)}((\epsilon_n)_{n\in \N^*})}}$ is true.

\item If $i \in SetOptim$, i.e., $\alpha_i = \alpha^*$, Equation \ref{eq:ce} becomes
\begin{equation*}
\frac {SR_{i,n} -SR_{i^*,n}} {\epsilon_n} = \frac{C_i - C^*} C + \frac{ C_{i}} C o(1) - \frac{C^*} C o(1)
\end{equation*}
and therefore
\begin{equation}
\label{allonsallonsunpeudhygiene}
\underset{n\rightarrow \infty} \lim\frac {SR_{i,n} -SR_{i^*,n}} {\epsilon_n} = \frac {C_i- C^*} C.\nonumber
\end{equation}

\begin{itemize}
\item If $i \in SubSetOptim$, i.e., $C_i =C^*$, $\underset{n\rightarrow \infty} \lim \frac {SR_{i,n} -SR_{i^*,n}} {\epsilon_n} = 0$. Therefore, ${\bf{P_{as}^{(i)}((\epsilon_n)_{n\in \N^*})}}$ is true.
\item if $i \notin SubSetOptim$, $\underset{n\rightarrow \infty} \lim\frac {SR_{i,n} -SR_{i^*,n}} {\epsilon_n} \ge 3$ by definition of $C$ (Equation \ref{eq:defc}). Therefore, ${\bf{P_{as}^{(i)}((\epsilon_n)_{n\in \N^*})}}$ is true. 
\end{itemize}
So for all $i \in \{1,\dots,M\}$, ${\bf{P_{as}^{(i)}((\epsilon_n)_{n\in \N^*})}}$ is true, hence $\bf{P_{as}((\epsilon_n)_{n\in \N^*})}$ holds.

Moreover, it shows that $\exists n_0 \in \N^*$, $\forall n \ge n_0$, $SR_{n}^{Solvers} = \underset{j \in \{1,\dots,M\}}\min SR_{j,n}= SR_{j^*,n}$ where $j^* \in SubSetOptim$.
\end{itemize}
\qed\end{proof}

\subsubsection{The $\log(M)$-shift for NOPA}\label{sec:logMshift}

We can now enunciate the first main theorem, stating that there is, with fair sharing of the budget as in NOPA, a $\log(M)$-shift, i.e., on a $\log$-$\log$ scale (x-axis equal to the number of evaluations and y-axis equal to the log of the simple regret), the regret of the portfolio is just shifted by $\log(M)$ on the x-axis.

\begin{theorem}[Regret of NOPA: the $\log(M)$ shift]\label{rnopa}
Let $(r_n)_{n\in \N^{*}}$ and $(s_n)_{n\in \N^{*}}$ be two non-decreasing integer sequences. 
Assume that:   
\begin{itemize}
\item $\forall x \in \mathcal{D},\ Var\ f(x)\leq 1$;
\item for some positive sequence $(\e_n)_{n\in \N^{*}}$, ${\bf{P_{as}((\epsilon_n)_{n \in \N^*})}}$ (Definition \ref{hps}) is true. 

Then, there exists $n_0$ such that:
\begin{equation}\label{eq:ecartSR}
\forall n\geq n_0,\ SR_{r_n}^{Selection} < SR_{r_n}^{Solver} + 2\e_{r_n}
\end{equation}
\begin{equation}\label{eq:proba}
  \mbox{with probability at least } 1-\frac{M}{s_n\e_{\lag(r_n)}^2}\nonumber
\end{equation}
\begin{equation}
  \mbox{ after}	\ e_{{n}} = r_n \cdot M \cdot\left(1+\sum_{i=1}^n \frac{s_i}{r_n}\right)\label{leshift}\ \mbox{evaluations.}\nonumber
\end{equation}
\end{itemize}

Moreover, if $(s_n)$, $\lag(n)$, $(r_n)$ and $(\e_n)$ satisfy 
$\sum_{j=1}^{\infty} \frac{1}{s_j \e_{\lag(r_j)}^2} < \infty,$
 then, almost surely, there exists $n_0$ such that:
\begin{equation} \label{eq:ecartSRas}
\forall n\geq n_0,\ SR_{r_n}^{Selection} < SR_{r_n}^{Solver} + 2\e_{r_n} 
\end{equation}
\begin{equation}\label{leshifti}
	\mbox{ after }	\ e_{{n}} = r_n \cdot M \cdot\left(1+\sum_{i=1}^n \frac{s_i}{r_n}\right)\ \mbox{evaluations.}\nonumber
\end{equation}
\end{theorem}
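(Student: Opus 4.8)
The plan is to track, at each selection step $n$, two sources of error in the selection regret and to bound each one. First I would fix the iteration index $r_n$ and recall that by construction NOPA, during the block $\{r_n,\dots,r_{n+1}-1\}$, recommends $\tilde x_{i^*,m}$ where $i^*$ is the arg-min over $i$ of $\hat\E_{s_n}[f(\tilde x_{i,\lag(r_n)})]$. So $SR_{r_n}^{Selection}=\E\big(f(\tilde x_{i^*,r_n})-f(x^*)\big)=SR_{i^*,r_n}$, and the question is purely: how much worse than $\min_j SR_{j,r_n}=SR_{r_n}^{Solver}$ can $SR_{i^*,r_n}$ be? The key decomposition is: the selection is made by comparing empirical means at the \emph{lagged} index $\lag(r_n)$, so I would split into (a) the event that the empirical comparison at index $\lag(r_n)$ picks a solver whose \emph{true} regret at $\lag(r_n)$ is within, say, $2\e_{\lag(r_n)}$ of the best true regret at $\lag(r_n)$ — i.e. the empirical noise does not cause a ``large'' mistake — and (b) the implication, via property $\mathbf{P_{as}}$, that a solver that is $2\e$-good at the lagged index stays $2\e$-good at all later indices, in particular at $r_n$, so its regret at $r_n$ exceeds the optimum by at most $2\e_{r_n}$.

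For step (a), the probabilistic estimate: for a fixed solver $i$, $\hat\E_{s_n}[f(\tilde x_{i,\lag(r_n)})]$ is an average of $s_n$ i.i.d. copies with variance $\le 1$ (by the hypothesis $\Var f(x)\le 1$, conditionally on the recommendation), so by Chebyshev $\P\big(|\hat\E_{s_n}[f(\tilde x_{i,\lag(r_n)})]-\E f(\tilde x_{i,\lag(r_n)})|\ge \e_{\lag(r_n)}\big)\le \frac{1}{s_n\e_{\lag(r_n)}^2}$. A union bound over the $M$ solvers gives that with probability at least $1-\frac{M}{s_n\e_{\lag(r_n)}^2}$ all $M$ empirical means are within $\e_{\lag(r_n)}$ of their true values; on that event the selected $i^*$ has true regret at index $\lag(r_n)$ exceeding the minimum by at most $2\e_{\lag(r_n)}$, i.e. $\Delta_{i^*,\lag(r_n)}<2\e_{\lag(r_n)}$. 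This is exactly the probability appearing in the theorem. For the almost-sure version I would invoke Borel--Cantelli: the hypothesis $\sum_j \frac{1}{s_j\e_{\lag(r_j)}^2}<\infty$ (times $M$, still summable) forces that, almost surely, only finitely many selection steps have a ``large'' empirical deviation, so from some $n_0$ on the good event holds at every step.

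For step (b), I would feed the conclusion $\Delta_{i^*,\lag(r_n)}<2\e_{\lag(r_n)}$ into $\mathbf{P_{as}^{(i^*)}}$: since $\lag$ is non-decreasing and $\lag(r_n)\to\infty$, the lagged indices eventually exceed the (almost sure) threshold $n_0$ from Definition~\ref{hpsi}, and then $\Delta_{i^*,\lag(r_n)}<2\e_{\lag(r_n)}$ implies $\Delta_{i^*,n_2}<2\e_{n_2}$ for all $n_2\ge \lag(r_n)$, in particular for $n_2=r_n$. Hence $SR_{i^*,r_n}<\min_j SR_{j,r_n}+2\e_{r_n}=SR_{r_n}^{Solver}+2\e_{r_n}$, which is \eqref{eq:ecartSR} / \eqref{eq:ecartSRas}. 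One subtlety: $\mathbf{P_{as}}$ is an almost-sure statement about a single random $n_0$, whereas the bad event in (a) varies with $n$; I would handle this by first conditioning on the full-measure set where $\mathbf{P_{as}}$ holds and only then running the Borel--Cantelli argument, so both ``eventually'' statements hold simultaneously. Finally, the evaluation count: after $n$ selection steps the portfolio budget is $M\cdot r_n$ (the loop over all $M$ solvers, one evaluation each, for $r_n$ iterations) and the comparison budget is $M\sum_{i=1}^n s_i$ (each of the $n$ selections resamples all $M$ recommendations $s_i$ times), giving $e_n=M r_n(1+\sum_{i=1}^n s_i/r_n)$ as in \eqref{leshift}; this is just bookkeeping.

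The main obstacle I anticipate is not the Chebyshev/Borel--Cantelli machinery, which is routine, but cleanly combining the two ``for $n$ large enough'' statements — the deterministic-looking threshold from $\mathbf{P_{as}}$ and the random threshold from Borel--Cantelli — into a single $n_0$ valid on a common full-measure event, and making sure the lag is exploited correctly: we need $\lag(r_n)$ large enough that $\mathbf{P_{as}}$'s conclusion propagates from $\lag(r_n)$ up to $r_n$, which is fine because $\lag(r_n)\le r_n$ and both diverge, but it is the place where the monotonicity of $\lag$ and of $(\e_n)$ is genuinely used.
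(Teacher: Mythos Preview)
Your proposal is correct and follows essentially the same route as the paper: Chebyshev plus a union bound over the $M$ solvers to get $\Delta_{i^*,\lag(r_n)}<2\e_{\lag(r_n)}$ with probability at least $1-M/(s_n\e_{\lag(r_n)}^2)$, then $\mathbf{P_{as}}$ to propagate this to $\Delta_{i^*,r_n}<2\e_{r_n}$, then Borel--Cantelli under the summability hypothesis for the almost-sure statement, with the evaluation count $e_n=M(r_n+\sum_{i\le n}s_i)$ read off directly from the algorithm. One small remark: monotonicity of $(\e_n)$ is not actually needed (nor assumed) --- the propagation from $\lag(r_n)$ to $r_n$ is entirely contained in the implication built into $\mathbf{P_{as}^{(i)}}$, so you can drop that from your list of ingredients.
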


\begin{remark}
Please notice that Equation \ref{eq:ecartSR} holds {\em{with a given probability}} whereas Equation \ref{eq:ecartSRas} holds {\em{almost surely}}. The almost sure convergence in the assumption is proved for some noisy optimization algorithms \cite{fabian}.
\end{remark}

\def\vieuxlemme{
Let $(r_n)_{n\in \N^{*}}$ and $(s_n)_{n\in \N^{*}}$ two non-decreasing sequences.
Assume that:   
\begin{itemize}
\item $\forall x \in \mathcal{D},\ Var\ f(x)\leq 1$;
\item for some positive sequence $(\e_n)_{n\in \N^{*}}$, ${\bf{P_{as}((\epsilon_n)_{n \in \N^*})}}$ (Definition \ref{hps}) is true, almost surely, there exists some $n_0\in \N^{*}$ such that :
\small
\begin{equation}\label{eq:ecartSR}
\forall n\geq n_0,\ SR_{r_n}^{Selection} < SR_{r_n}^{Solver} + 2\e_{r_n};
\end{equation}
\begin{equation}\label{eq:proba}
	\mbox{with probability at least } 1-\frac{M}{s_n\e_{\lag(r_n)}^2}
\end{equation}
\begin{equation}
	\mbox{ with }	e_{{n}}=r_n \cdot M \cdot\left(1+\sum_{i=1}^n \frac{s_i}{r_n}\right)\label{leshift}
\end{equation}evaluations.
\end{itemize}
\end{lemma}

{\bf{Interpretation:}} Within a factor $M(1+o(1))$ on the number of evaluations (see Equation \ref{leshift}; under some assumptions on $s_n$ and $r_n$) we get approximately the same simple regret (Equation \ref{eq:ecartSR}; if $\e_n$ is not too large). On a graph with x-axis the logarithm of the number of evaluations and with y-axis the logarithm of the simple-regret, there is no significant change on the y-axis and there is a $\log(M)$ shift on the x-axis.
}
\begin{proof}

First, the total number of evaluations, up to the construction of $\tilde x_{i^*_{r_n},r_n}$ at iteration $r_n$, is $e_{n}=M \left(r_n  + \sum_{i=1}^n s_i\right)$; at this point, each solver has spent $r_n$ evaluations.

{\bf{Step 1:}} Proof of Equation \ref{eq:ecartSR}.

By Chebyshev's inequality, for a given $i \in \{1,\dots,M\}$,
\begin{equation*}
\P(| \E\left[f(\tilde x_{i,\lag(r_{n})})\right] - \hat{\E}_{s_n}\left[f(\tilde x_{i,\lag(r_{n})})\right] | > \e_{\lag(r_{n})})< \frac{\Var \ f\left(\tilde x_{i,\lag(r_{n})}\right)}{s_{n}\e_{\lag(r_{n})}^{2}}\leq \frac{1}{s_n\e_{\lag(r_{n})}^{2}}.
\end{equation*}

By union bound,
\begin{equation*}
\P(\exists i \in \{1,\dots,M\} ; | \E\left[f(\tilde x_{i,\lag(r_{n})})\right] - \hat{\E}_{s_n}\left[f(\tilde x_{i,\lag(r_{n})})\right] | > \e_{\lag(r_{n})} )< \frac{M}{s_n\e_{\lag(r_{n})}^{2}}.
\end{equation*}

With notation $i^*=i^{*}_{r_n}=\underset{i \in \{1,\dots,M\}}{\arg\min}\ \hat{\E}_{s_n}\left[f(\tilde{x}_{i,\lag(r_n)})\right]$, it follows that, with probability $1-\frac{M}{s_{n}\e_{\lag(r_{n})}^{2}}$ :
\begin{eqnarray}
\E\left[f(\tilde{x}_{i^{*},\lag(r_n)})\right]&<&\hat{\E}_{s_{n}}\left[f(\tilde{x}_{i^{*},\lag(r_n)})\right]+\e_{\lag(r_{n})};\nonumber\\
\E\left[f(\tilde{x}_{i^{*},\lag(r_n)})\right]&<&\hat{\E}_{s_{n}}\left[f(\tilde{x}_{j,\lag(r_n)})\right]+\e_{\lag(r_{n})},\ \forall j \in \{1,\dots,M\} ;\nonumber\\
\E\left[f(\tilde{x}_{i^{*},\lag(r_n)})\right]&<&\E\left[f(\tilde{x}_{j,\lag(r_n)})\right]+2\e_{\lag(r_{n})},\ \forall j \in \{1,\dots,M\} ;\nonumber\\
{{\E\left[f(\tilde{x}_{i^{*},\lag(r_n)})\right]-\E\left[f(x^{*})\right]}}&<&\underset{j\in \{1,\dots,M\}}{\min}SR_{j,\lag(r_n)}+2\e_{\lag(r_{n})};\nonumber
\end{eqnarray}

So, with probability at least $1 - \frac {M}{s_n\epsilon_{\lag(r_n)}^2}$, 
\begin{equation}
	\Delta_{i^*,\lag(r_n)}<2\epsilon_{\lag(r_n)}.\label{propaz}
\end{equation}
Using ${\bf{P_{as}((\epsilon_n)_{n \in \N^*})}}$, Equation \ref{propaz} yields $\Delta_{i^*,r_n}<2\epsilon_{r_n}$ for $\lag(r_n)$ large enough, which is the expected result.

{\bf{Step 2:}} Proof of Equation \ref{eq:ecartSRas}.

We denote by $E_n$ the event ``$\Delta_{i^*,r_n}\ge 2\epsilon_{r_n}$'' (equivalent to $SR_{r_n}^{Selection} \ge SR_{r_n}^{Solver} + 2\e_{r_n}$). By Equation \ref{eq:ecartSR}, there exists $n_0 \in \N^*$ such that, $\forall n \ge n_0$, $\P(E_n) \le \frac M {s_n \e_{\lag(r_n)}^{2}}$.

Therefore 
\begin{equation*}
\sum_{j=1}^{\infty} \P(E_j) \le \sum_{j=1}^{n_0-1} \P(E_j) +M\sum_{j=n_0}^{\infty} \frac{1}{s_j \e_{\lag(r_j)}^2}<\infty.
\end{equation*}
According to Borel-Cantelli lemma, almost surely, for $n$ large enough, 
\begin{equation*}
SR_{r_n}^{Selection} < SR_{r_n}^{Solver} + 2\e_{r_n} 
\end{equation*}
and the number of evaluations is still $e_{{n}}=r_n \cdot M \cdot\left(1+\sum_{i=1}^n \frac{s_i}{r_n}\right)$.
\qed\end{proof}

We now use Proposition \ref{example} to apply Theorem \ref{rnopa} on a classical case with almost sure convergence.

\begin{appl}[$\log(M)$ shift]\label{application1}
Assume that for any solver $ i \in \{1,\dots,M \}$, the simple regret after $n \in \N^*$ evaluations is $SR_{i,n}= (1+ o(1)) \frac {C_i} {n^{\alpha_i}}$. We define $\epsilon_n= \frac C {n^{\alpha^*}}$ (where $C$ and $\alpha^{*}$ are defined as in Equations \ref{eq:defc} and \ref{eq:bestalpha}). Assume that $\forall x \in \mathcal{D}$, $\Var\ f(x) \le 1$ and that $(s_n)$, $(\lag(n))$ and $(r_n)$ satisfy:
\begin{eqnarray*}
&\sum_{j=1}^{\infty}& \frac{1}{s_j \e_{\lag(r_j)}^2} < \infty\\
\mbox{and    }&\sum_{i=1}^n& s_i=o(r_n).
\end{eqnarray*}

Then, almost surely, 
\begin{enumerate}[label=\roman*)]
\item\label{appl:logm1} for $n$ large enough, $SR_{r_n}^{Selection} < SR_{r_n}^{Solver} + 2\e_{r_n}$  after $e_{{n}}=r_n \cdot M \cdot\left(1+\sum_{i=1}^n \frac{s_i}{r_n}\right)$ function evaluations; 
\item\label{appl:logm2} for $n$ large enough,$\ SR_{r_n}^{Selection} \leq \underset{i\in SubSetOptim}\max SR_{i,r_n}$  after $e_{{n}}=r_n \cdot M \cdot\left(1+\sum_{i=1}^n \frac{s_i}{r_n}\right)$ function evaluations;
\item\label{appl:logm3} the slope of the selection regret verifies $\underset{n \rightarrow \infty} \lim \frac{\log(SR_{r_n}^{Selection})}{\log (e_n)} = -\alpha^*$.
\end{enumerate}
\end{appl}

$SR_{r_n}^{Selection}$ corresponds to the simple regret at iteration $r_n$ of the portfolio, which corresponds to $e_n=r_n \cdot M \cdot\left(1+\sum_{i=1}^n \frac{s_i}{r_n}\right)$ evaluations in the portfolio - hence the comment ``after $e_n$ function evaluations''.

\begin{proof}
By Property \ref{proposition} and Theorem \ref{rnopa}, \ref{appl:logm1} holds.

By Equation \ref{propaz}, and Property \ref{proposition}, $SR_{r_n}^{Selection}=SR_{i,r_n}$, with $i\in SubSetOptim$ and \ref{appl:logm2} follows. We obtain:
\begin{equation*}
\mbox{a.s.   }\log(SR_{r_n}^{Selection}) = \log(SR_{i,r_n} ),\text{ where $i\in SubSetOptim$.}
\end{equation*}
By the definition of $SubSetOptim$ (Equation \ref{subsetoptim}):
\begin{eqnarray*}
\underset{n \rightarrow \infty} \lim\frac{\log(SR_{r_n}^{Selection})}{\log (e_n)}&=&\underset{n \rightarrow \infty} \lim \frac{\log(SR_{i^{*},r_n})}{\log(M)+\log(r_n)+\log\left(1+\sum_{i=1}^n \frac{s_i}{r_n}\right)}\\
&=&\underset{n \rightarrow \infty} \lim \frac{\log(SR_{i^{*},r_n})}{\log(r_n)}= -\alpha^*.
\end{eqnarray*}
Hence \ref{appl:logm3} holds.
\qed\end{proof}

\def\jenlevecetrucquiestincorrect
{\color{black}
\begin{remark}\label{rem:logm}
From point \ref{appl:logm3} in Application \ref{application1}, a.s. 
\begin{eqnarray*}
\log(SR_{r_n}^{selection})&=&-\alpha^{*}(\log(M)+\log(r_n)+ o(1))\\
\mbox{and   }\log(SR_{r_n}^{solver})&=&-\alpha^{*}(\log(r_n)+o(1)).
\end{eqnarray*}
Moreover, $SR_{r_n}$ correspond to $e_n=r_n \cdot M \cdot\left(1+\sum_{i=1}^n \frac{s_i}{r_n}\right)$ evaluations in the portfolio.
Hence, we get the $\log(M)$-shift.
\end{remark}
TODO: c'est incorrect parce qu'on dit "regardez le log(M)", alors qu'il ne rime a rien,
puisqu'il est neligeable a cote de log rn, et que donc on pourrait l'enlever. les equations
qui sont ici ne montrent pas du tout le logM shift
}

\begin{ex}\label{ex:1} The following parametrization matches the conditions in Application \ref{application1}.
\begin{eqnarray*}
r_n&=&\lceil n^{3+r+r^{'}} \rceil;\\
\lag(n)&=&\lceil \log(n) \rceil;\\
s_n&=&\lceil n^{1+r^{'}} \rceil, \mbox{$r>0$ and $r^{'}\ge1$, $n \in \N^*$}.
\end{eqnarray*} 
\end{ex}

\subsubsection{The $\log(M')$-shift for INOPA}\label{logmp}

We now show that INOPA, which distributes the budget in an unfair manner, can have an improvement over NOPA. Instead of a factor $M$ (number of solvers in the portfolio), we get a factor $M'$, number of approximately optimal solvers. This is formalized in the following theorem:

 \begin{theorem}[$\log(M')$ shift]\label{th11}
Let $(r_n)_{n\in \N^{*}}$ and $(s_n)_{n\in \N^{*}}$ two non-decreasing integer sequences. 
Assume that:   
\begin{itemize}
\item $\forall x \in \mathcal{D},\ Var\ f(x)\leq 1$;
\item for some positive sequence $(\e_n)_{n\in \N^{*}}$, ${\bf{P_{as}((\epsilon_n)_{n \in \N^*})}}$ (Definition \ref{hps}) holds. 
\end{itemize}
We define $S=\{i| \exists n_0 \in \N^*, \forall n \ge n_0, \Delta_{i,n}< 2 \epsilon_{n} \}$ and $M'$ denotes the cardinality of the set $S$, i.e., $M'=|S|$. Then, there exists $n_0$ such that:
\begin{equation}
\forall n\geq n_0,\ SR_{r_n}^{Selection} < SR_{r_n}^{Solver} + 2\e_{r_n}\label{pouet}
\end{equation}
\begin{equation}\label{eq:probai}
  \mbox{with probability at least } 1-\frac{M}{s_n\e_{\lag(r_n)}^2}\nonumber
\end{equation}
\begin{equation}
  \mbox{ after}	\ e_{{n}} = r_n \cdot M' \cdot\left(1+\frac M {M'}\sum_{i=1}^n \frac{s_i}{r_n}\right)+(M-M') \lag(r_n)\ \mbox{evaluations.}\nonumber
\end{equation}

Then, if $(s_n)$, $(\lag(n))$, $(r_n)$ and $(\e_n)$ satisfy $\sum_{j=1}^{\infty} \frac{1}{s_j \e_{\lag(r_j)}^2} < \infty$, $\lag(n)=o(n)$ and $\sum_{j=1}^{n} s_j =o(r_n)$, then, almost surely, there exists $n_0$ such that:
\begin{equation} 
\forall n\geq n_0,\ SR_{r_n}^{Selection} < SR_{r_n}^{Solver} + 2\e_{r_n} \label{pouet2}
\end{equation}
\begin{equation}
	\mbox{ after }	e_{{n}} =r_n \cdot M' \cdot(1+o(1))\ \mbox{evaluations.}\nonumber
\end{equation}
\end{theorem}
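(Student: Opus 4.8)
The plan is to mirror the proof of Theorem \ref{rnopa}, changing only the bookkeeping of evaluations, since the selection mechanism in INOPA (Algorithm \ref{algo:inopa}) makes the \emph{same} comparison — the $\arg\min$ of $\hat\E_{s_n}[f(\tilde x_{i,\lag(r_n)})]$ — that NOPA makes; what differs is only which evaluations are actually spent. First I would establish the probabilistic and almost-sure regret bounds (\ref{pouet}) and (\ref{pouet2}): this is verbatim Step 1 and Step 2 of the proof of Theorem \ref{rnopa}. Namely, Chebyshev's inequality plus a union bound over the $M$ solvers gives, at the $n$-th selection, that with probability at least $1-\frac{M}{s_n\e_{\lag(r_n)}^2}$ the empirical means at iteration $\lag(r_n)$ are all within $\e_{\lag(r_n)}$ of their expectations, hence $\Delta_{i^*,\lag(r_n)} < 2\e_{\lag(r_n)}$; then ${\bf{P_{as}((\e_n)_{n\in\N^*})}}$ upgrades this (for $\lag(r_n)$ large enough, i.e.\ $n$ large enough) to $\Delta_{i^*,r_n} < 2\e_{r_n}$, which is exactly (\ref{pouet}). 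The almost-sure version (\ref{pouet2}) follows from Borel--Cantelli applied to the events $E_n = \{\Delta_{i^*,r_n}\ge 2\e_{r_n}\}$, using the summability hypothesis $\sum_j \frac{1}{s_j\e_{\lag(r_j)}^2} < \infty$. One subtlety worth a sentence: INOPA only keeps running the selected solver, so one must note that the recommendation $\tilde x_{i^*,r_n}$ the portfolio outputs at iteration $r_n$ is indeed available, because between selections INOPA advances the selected solver all the way up to $r_{n+1}$ while advancing the others only to $\lag(r_{n+1})$; this is precisely the design intent ("mimics NOPA but without spending the useless evaluations").

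The real work is the evaluation count. I would count, up to the construction of $\tilde x_{i^*_{r_n},r_n}$ at the $n$-th selection, three contributions. (a) \textbf{Comparison budget}: at each selection $j\le n$ we resample $s_j$ times each of the $M$ recommendations, contributing $M\sum_{i=1}^n s_i$. (b) \textbf{Portfolio budget for the non-optimal solvers}: define $S$ and $M' = |S|$ as in the statement; a solver $i\notin S$ is, from some point on, never selected, so it is advanced only along the lagged schedule — by iteration $r_n$ it has received only $\lag(r_n)$ evaluations (up to a fixed additive constant absorbing the finite initial segment before $i$ stops being selected, which does not affect the asymptotics). There are $M-M'$ such solvers, contributing $(M-M')\lag(r_n)$. (c) \textbf{Portfolio budget for the (near-)optimal solvers}: a solver $i\in S$ may be selected and is then run up to the full $r_n$; charging the worst case, each of the $M'$ solvers in $S$ costs $r_n$ evaluations, contributing $M'r_n$. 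Summing, $e_n = M' r_n + (M-M')\lag(r_n) + M\sum_{i=1}^n s_i = r_n M'\bigl(1 + \frac{M}{M'}\sum_{i=1}^n \frac{s_i}{r_n}\bigr) + (M-M')\lag(r_n)$, which is the claimed expression.

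For the asymptotic form $e_n = r_n M'(1+o(1))$, I would simply divide: $\frac{e_n}{r_n M'} = 1 + \frac{M}{M'}\cdot\frac{\sum_{i=1}^n s_i}{r_n} + \frac{M-M'}{M'}\cdot\frac{\lag(r_n)}{r_n}$. Under the hypotheses $\sum_{j=1}^n s_j = o(r_n)$ and $\lag(n) = o(n)$ (note $\lag(r_n)\le r_n$ and $\lag(r_n) = o(r_n)$ since $\lag$ is non-decreasing with $\lag(n)=o(n)$ — here I would remark that $\lag(r_n)/r_n \to 0$ follows because $\lag(r_n) \le \lag'(r_n)$ has vanishing ratio; more carefully, from $\lag(n)=o(n)$ and $r_n\to\infty$ we get $\lag(r_n)/r_n\to 0$), both correction terms vanish, giving the result. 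The main obstacle is item (b): one has to be careful that a solver in $\{1,\dots,M\}\setminus S$ really does stop receiving full-budget evaluations after finitely many selections — this is exactly what the definition of $S$ together with ${\bf{P_{as}}}$ and the fact (from Step 1) that eventually the selected solver lies in $S$ guarantees — and that the finitely many "wasted" full runs it may have received earlier contribute only an $O(1)$ term, negligible against $r_n$. I would close by noting that, exactly as in Application \ref{application1}, combining with Proposition \ref{proposition} identifies $M'$ with $|SubSetOptim|$ in the power-law case and yields slope $-\alpha^*$ for the selection regret against $e_n$.
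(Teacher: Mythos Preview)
Your proposal is correct and follows essentially the same route as the paper: first reduce the regret inequalities (\ref{pouet}) and (\ref{pouet2}) to Theorem~\ref{rnopa} by noting that INOPA performs the same comparisons and issues the same recommendations as NOPA, then split the evaluation count into the three contributions $M'r_n$ (solvers in $S$, worst case), $(M-M')\lag(r_n)$ (solvers outside $S$, eventually only advanced along the lagged schedule), and $M\sum_{i\le n}s_i$ (comparison budget), and finally divide by $M'r_n$ using $\lag(n)=o(n)$ and $\sum s_i=o(r_n)$. Two cosmetic points: your displayed $e_n = \dots$ should really be $e_n \le \dots$ (as in the paper), since the $M'r_n$ term is an upper bound; and your parenthetical ``$\lag(r_n)\le \lag'(r_n)$'' is garbled, but your ``more carefully'' clause is the right argument.
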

\begin{proof}

For a given number of comparisons $n$, the INOPA algorithm makes the same comparisons and recommends the same value as the NOPA algorithm. Therefore all the results in Theorem \ref{rnopa} still hold, hence Eqs. \ref{pouet} and \ref{pouet2} hold - but we have to prove the number $e_n$ of evaluations.

As the algorithm chooses a solver which is not in $S$ a finite number of times, there exists $n_1$ such that, for all $n\ge n_1$, the portfolio chooses a solver in $S$ at the $n^{th}$ comparison. We consider $n_0 \ge n_1$ such that $\lag(n_0)\ge r_{n_1}$.
For $n\ge n_0$ the new number of evaluations after $n$ comparisons is:
\begin{eqnarray}
	e_n &\le& M' \cdot r_n +M\cdot\sum_{i=1}^{n}s_i + (M-M') \lag(r_n)\nonumber\\
    &=& M'\cdot r_n \cdot \left(1+\frac{M}{M^{'}}\sum_{i=1}^{n}\frac{s_i}{r_n} +\frac{M-M'}{M'}\frac{\lag(r_n)}{r_n}\right)\label{tsointsoin}\nonumber\\
    &=& M'\cdot r_n \cdot \left(1+o\left(1\right)\right).\nonumber
\end{eqnarray}
\qed\end{proof}

Using Proposition \ref{proposition}, we apply Theorem \ref{th11} above to the case of linearly convergent optimization solvers (linear in a $\log$-$\log$ scale, with $x$-axis logarithmic of the number of evaluations and $y$-axis logarithmic of the simple regret).

\begin{appl}[$\log(M') $shift]\label{application2}
{\color{black}Assume that} $\forall x \in \mathcal{D}$, $\Var\ f(x) \le 1$ and for any solver $ i \in \{1,\dots,M \}$, the simple regret after $n \in \N^*$
evaluations is $SR_{i,n}= (1+ o(1)) \frac {C_i} {n^{\alpha_i}}$. We define $\epsilon_n= \frac C {n^{\alpha^*}}$ {\color{black}with $C$ and $\alpha^*$ defined as in Eq. \ref{eq:defc} and \ref{eq:bestalpha}.} If $(s_n)_{n\in\N^*}$, $\lag(n)_{n\in\N^*}$, $(r_n)_{n\in\N^*}$ and $(\e_n)_{n\in\N^*}$ are chosen such that $\sum_{j=1}^{\infty} \frac{1}{s_j \e_{\lag(r_j)}^2} < \infty$, $\lag(n)=o(n)$ and $\sum_{j=1}^{n} s_j =o(r_n)$, then, almost surely, there exists $n_0$ such that:
\begin{enumerate}[label=\roman*)]
\item\label{logmprime1} $\forall n\ge n_0$, \ $SR_{r_n}^{Selection} < SR_{r_n}^{Solver} + 2\e_{r_n}$  after$\ e_n= M'\cdot r_n (1+o(1))$ evaluations; 
\item\label{logmprime2} $\forall n\ge n_0$, \ $\ SR_{r_n}^{Selection} \leq \underset{i\in SubSetOptim} \max SR_{i,r_n}$ after $e_n= M'\cdot r_n (1+o(1))$ evaluations;
\item\label{logmprime3} the slope of the selection regret verifies $\underset{n \rightarrow \infty} \lim \frac{\log(SR_{r_n}^{Selection})}{\log (e_n)} = -\alpha^*$.
\end{enumerate}
\end{appl}

As usual, $SR_{r_n}^{Selection}$ corresponds to the simple regret at iteration $r_n$ of the portfolio, which corresponds to $e_n=r_n \cdot M' \cdot\left(1+o(1)\right)$ evaluations in the portfolio - hence the comment ``after $e_n$ function evaluations''.

\begin{proof}
See proof of Application \ref{application1}.\qed
\end{proof}


\begin{ex}(log(M') shift)\label{ex:2}
The parametrization of Example \ref{ex:1} also matches the assumptions of Application \ref{application2}.  
\end{ex}

\subsubsection{The {\em{lag}} is necessary}\label{neclag}

In this section, we show that, if there is no {\em{lag}} (i.e., $\forall n,\ \lag(n)=n$) whenever there are only two solvers, and whenever these solvers have different slopes, the portfolio algorithm {\color{black}might} not have a satisfactory behavior, in the sense that, in the example below, it will select infinitely often the worst solver - unless $s_n$ is so large that the comparison budget is not small compared to the portfolio budget.

\begin{ex}[The {\em{lag}} is necessary]\label{ex:lagnecessary}
{\em{Let us consider the behavior of NOPA without {\em{lag}}.
We assume the following:
\begin{itemize}
\item no lag: $\forall n \in \N^*$, $\lag(r_n)=r_n$.
\item the noise is a standard normal distribution $\cal N$;
\item there are $M=2$ solvers and the two solvers of the portfolio are such that, almost surely, $SR_{i,m}= (1+ o(1)) \frac {C_i} {m^{\alpha_i}}$ after $m \in \N^*$ evaluations, $i \in \{1,2\}$, with $\alpha_1 = 1-e$ and $\alpha_2= 1-2e$, where $e\in [0,0.5)$ is a constant.
\item The comparison budget is moderate compared to the portfolio budget, in the sense that
	\begin{equation}
		s_n= O(r_n^{\beta})\label{ncku}
	\end{equation}
	with $\beta \le 2-4e$.
\end{itemize}
Then, almost surely, the portfolio will select the wrong solver infinitely often.}}
\end{ex}

\begin{proof}

Let us assume the scenario above.
Let us show that infinitely often, the portfolio will choose the wrong solver. 
Consider $Y_{1,n}$ and $Y_{2,n}$ defined by
\begin{equation*}
	Y_{i,n}= \frac1{s_n}\sum_{\ell=1}^{s_n}f(\tilde x_{i,r_n},w^{(i,\ell)})=\E_\w[f(\tilde x_{i,r_n},\w)] + Z_{i},~ i \in \{1,2\},
\end{equation*}
where 
\begin{itemize}
	\item The $w^{(i,\ell)}$ are independent Gaussian random variables,
	\item $Z_{i}=\frac1{s_n}\sum_{\ell=1}^{s_n} w^{(i,\ell)}$,
	\item $\tilde x_{i,r_n}$ is the search point recommended by solver $i$ after $r_n$ evaluations,
\end{itemize}
i.e., $Y_{i,n}$ is the average of $s_n$ evaluations of the noisy fitness function at $\tilde x_{i,r_n}$.

We denote for all $n\in \N^*$,  
\begin{equation}
\delta_{n}= \E_\w [f(\tilde{x}_{2,r_n},\w)] - \E_\w [f(\tilde{x}_{1,r_n},\w)]=SR_{2,r_n}-SR_{1,r_n}.\label{en}\nonumber
\end{equation}

$\delta_n$ is a random variable, because the expectation operator operates on $\w$; the random dependency in $(\tilde{x}_{1,r_n},\tilde{x}_{2,r_n})$ remains.

\begin{equation*}
v_{1,n}=\Var\ Y_{1,n}~\text{ and }~v_{2,n}=\Var\ Y_{2,n}.
\end{equation*}

\begin{definition}[$\mathcal{MR}_n$]\label{def:misR}
Let $\mathcal{MR}_n$ (misranking at iteration $n$) be the event ``the portfolio chooses the wrong solver at decision step $n\in \N^*$''. 
\end{definition}

\begin{remark}\label{rem:bestSR}
From the definitions of solvers $1$ and $2$, solver $1$ is the best in terms of simple regret. As a result, if $n$ is big enough, a.s.,  we get $SR_{1,r_n}< SR_{2,r_n}$, i.e., $\E_\w[f(\tilde{x}_{1,r_n},\w)] < \E_\w [f(\tilde{x}_{2,r_n},\w)]$. Then it is straightforward that, if a.s. $\E_\w[f(\tilde{x}_{2,r_n},\w)] + Z_{2} < \E_w[f(\tilde{x}_{1,r_n},\w)]+Z_{1}$, i.e., $\delta_n<Z_1-Z_2$, the portfolio chooses solver $2$ whereas solver $1$ is the best: a.s. $\mathcal{MR}_{r_n}$ occurs.   
\end{remark}

{\bf{Step 1: constructing independent events related to wrong solver choices.}}

Let us define $\delta'_n=2(C_2/r_n^{1-2e}-C_1/r_n^{1-e})$.
We have
\begin{equation}
	\delta'_n= O\left(\frac {C_2} {r_n^{1-2e}}\right) \label{minsheng}
\end{equation}

Almost surely, $\delta_n= (1+ o(1)) \frac {C_2} {r_n^{1-2e}}-(1+ o(1)) \frac {C_1} {r_n^{1-e}}$, for $n$ sufficiently large, $\delta_n<\delta'_n$. 

$\tau_n$ denotes the event: ``$Z_1-Z_2>\delta'_n$''. 
So, almost surely, for $n$ sufficiently large, the event 
$\mathcal{MR}_n$ includes the event $\tau_n$, i.e.
\begin{equation}
	\mbox{almost surely, for $n$ sufficiently large, }\tau_n\subset \mathcal{MR}_n\label{ofck}.
\end{equation}

{\bf{Step 2: Almost surely, $\tau_n$ occurs infinitely often.}}

The $\tau_n$ are independent, so we apply the converse of Borel-Cantelli lemma. First, compute the probability of $\tau_n$;
\begin{eqnarray*}
P(\tau_n)&=& P\left(\sqrt{v_{1,n}+v_{2,n}}{\cal{N}}>\delta'_{n}\right),\\
       &=&P\left({\cal N}>\frac{\delta'_n}{\sqrt{v_{1,n}+v_{2,n}}}\right)
\end{eqnarray*}
By definition of $v_{1,n}$ and $v_{2,n}$, $\exists C>0,~s.t.~ \sqrt{v_{1,n}+v_{2,n}}=\frac{C}{\sqrt{s_n}}$, so by Equation \ref{ncku}, $\exists \tilde C>0,~ \frac{1}{\sqrt{v_{1,n}+v_{2,n}}}=\frac{\sqrt{s_n}}{C}\leq \tilde C r_{n}^{\beta/2}$.\\
By \ref{minsheng}, $\exists C'>0~s.t.~\frac{\delta'_n}{\sqrt{v_{1,n}+v_{2,n}}} \leq C'\frac{C_2}{r_{n}^{1-2e}}\tilde C r_{n}^{\beta/2}$, with $\beta/2=1-2e$. Hence  $P\left({\cal N}>\frac{\delta'_n}{\sqrt{v_{1,n}+v_{2,n}}}\right) \geq P\left({\cal N}>D\right)$, with $D>0$.\\
We get $P(\tau_n)=\Omega(1)$, as the $\tau_n$ are independent, Borel-Cantelli's lemma (converse) implies that almost surely, $\tau_n$ occurs infinitely often.

{\bf{Step 3: Concluding.}}

Step 2 has shown that almost surely, $\tau_n$ occurs infinitely often.
Equation \ref{ofck} implies that this is also true for $\mathcal{MR}_n$.

Therefore, infinitely often, the wrong solver is selected.\qed
\end{proof}

\section{Experimental results}\label{xp}

This section is organized as follows.
Section \ref{rw} introduces another version of algorithms, more adapted to some particular implementations of solvers. Section \ref{sec:diffalgo} describes the different solvers contained in the portfolios and some experimental results. Section \ref{sec:samalgo} describes the similar solvers contained in the portfolios and some experimental results. 
In all tables, $CT$ refers to the computation time and NL refers to ``no {\em{lag}}''. “s” as a unit refers to “seconds”.

\subsection{Real world constraints \& introducing sharing}\label{rw}
The real world introduces constraints. Most solvers do not allow you to run one single fitness evaluation at a time, so that it becomes difficult to have exactly the same number of fitness evaluations per solver. We will here adapt Algorithm \ref{algo:nopa} for such a case; an additional change is the possible use of ``Sharing'' options (i.e., sharing information between the different solvers). The proposed algorithm is detailed in Algorithm \ref{algo:NopaRealWorld}.

\begin{algorithm}
{\color{black}
\begin{algorithmic}[1]
\Require {noisy optimization solvers $Solver_1, Solver_2,\dots, Solver_M$}
\Require {a {\em{lag}} function $\lag: \N^{*} \mapsto \N^{*}$} \Comment{Refer to Definition \ref{def:lag}}
\Require {a non-decreasing integer sequence $r_1, r_2, \dots$}{\color{black}\Comment{Periodic comparisons}}
{\color{black}\Require {a non-decreasing integer sequence $s_1, s_2, \dots$} \Comment{Number of resamplings}}
\Require {a boolean $sharing$}
\State{$n\leftarrow 1$} \Comment{Number of selections}
\State{$i^*\leftarrow null$}	 \Comment{Index of recommended solver}
\State{$x^*\leftarrow null$}	 \Comment{Recommendation}
\State{$R\leftarrow 0^M$} \Comment{Vector of number of evaluations}
\While{budget is not exhausted}
	\If{{\color{black}$\underset{i\in\{1,\dots,M\}}\min R_{i} \geq r_n$}}
		\State{$i^{*}=\underset{i \in \{1,\dots,M\}}{\arg\min} \hat \E_{s_n}[f(\tilde{x}_{i,\lag(r_{n})})]$} \Comment{Algorithm selection}
		\State{{\color{black}$x^*=\tilde x_{i^{*},R_{i^*}}$}} \Comment{Update recommendation}
		\If{$sharing$}
			\State{{\color{black}All solvers receive $x^{*}$ as next iterate}}
		\EndIf
		\State{$n\leftarrow n+1$}
	\Else
		\For{$i \in \{1,\dots,M\}$}
			\While{$R_i < r_n$}
				\State{Apply one iteration for $Solver_i$, increase $R_i$ by the number of evals. spent}
			\EndWhile
		\EndFor
	\EndIf
\EndWhile
\State{{\color{black}$x^*=\tilde x_{i^{*},R_{i^*}}$}} \Comment{Update recommendation} 
\end{algorithmic}
}
\caption{\label{algo:NopaRealWorld} Adapted version of NOPA in real world constraints.}
\end{algorithm} 

\def\oldalgo{
\begin{algorithm}
\caption{\label{algo:NopaRealWorld} Adapted version of NOPA in real world constraints.}
{\bf{Iteration 1:}} one iteration for solver $1$, one iteration for solver 2, \dots, one iteration for solver $M$.\\
{\bf{Iteration 2:}} one iteration for each solver which received less than 2 evaluations.\\

\makebox[\linewidth][c]{$\smash{\vdots}$\ \ \ $\smash{\vdots}$\ \ \ $\smash{\vdots}$ }\\

{\bf{Iteration $i$:}} one iteration for each solver which received less than $i$ evaluations.\\

\makebox[\linewidth][c]{$\smash{\vdots}$\ \ \ $\smash{\vdots}$\ \ \ $\smash{\vdots}$ }\\

{\bf{Iteration $r_1$:}} one iteration for each solver which received less than $r_1$ evaluations.\\
{\fbox{\bf{Algorithm Selection}}} Evaluate $X=\{\tilde x_{1,\lag(r_1)},\dots,\tilde x_{M,\lag(r_1)}\}$, each of them $s_1$ times; the recommendation of NOPA is $\tilde x_{i^*,m}$ for iterations $m\in\{r_1,\dots,r_2-1\}$, where $i^*=\underset{i \in \{1,\dots,M\}}\argmin \hat \E_{s_1}[f(\tilde{x}_{i,\lag(r_{1})})]$. If sharing is enabled, all solvers receive $\tilde x_{i^*,r_1}$ as next iterate.\\
{\bf{Iteration $r_1+1$:}} one iteration for each solver which received less than $r_1+1$ evaluations.\\

\makebox[\linewidth][c]{$\smash{\vdots}$\ \ \ $\smash{\vdots}$\ \ \ $\smash{\vdots}$ }\\

{\bf{Iteration $r_2$:}} one iteration for each solver which received less than $r_2$ evaluations.\\
{\fbox{\bf{Algorithm Selection}}} Evaluate $X=\{\tilde x_{1,\lag(r_2)},\dots,\tilde x_{M,\lag(r_2)}\}$, each of them $s_2$ times; the recommendation of NOPA is $\tilde x_{i^*,m}$ for iterations $m\in\{r_2,\dots,r_3-1\}$, where $i^*=\underset{i \in \{1,\dots,M\}}\argmin \hat \E_{s_2}[f(\tilde{x}_{i,\lag(r_{2})})]$. If sharing is enabled, all solvers receive $\tilde x_{i^*,r_2}$ as next iterate.\\

\makebox[\linewidth][c]{$\smash{\vdots}$\ \ \ $\smash{\vdots}$\ \ \ $\smash{\vdots}$ }\\

{\bf{Iteration $r_n$:}} one iteration for each solver which received less than $r_n$ evaluations.\\
{\fbox{\bf{Algorithm Selection}}} Evaluate $X=\{\tilde x_{1,\lag(r_n)},\dots,\tilde x_{M,\lag(r_n)}\}$, each of them $s_n)$ times; the recommendation of NOPA is $\tilde x_{i^*,m}$ for iterations $m\in\{r_n,\dots,r_{n+1}-1\}$, where $i^*=\underset{i \in \{1,\dots,M\}}\argmin \hat \E_{s_n}[f(\tilde{x}_{i,\lag(r_{n})})]$. If sharing is enabled, all solvers receive $\tilde x_{i^*,r_n}$ as next iterate.
\end{algorithm}
}

This is an adapted version of NOPA for coarse grain, i.e., in case the solvers can not be restricted to doing one fitness evaluation at a time. The adaptation of INOPA is straightforward.
We now present experiments with this adapted algorithm.

\subsection{Experiments with different solvers in the portfolio}\label{sec:diffalgo}
In this paper, unless specified otherwise, a portfolio {\em{with lag}} means that $\forall n\in\N^*, \lag(n)<n$.  For the portfolio {\em{without lag}}, at the $n^{th}$ algorithm selection, we compare $\tilde x_{i,r_n}$ instead of $\tilde x_{i,\lag(r_n)}$. This means that we choose the identity as a {\em{lag}} function $\lag$, i.e., $\forall n\in\N^*, \lag(n)=n$.

For our experiments below, we use four noisy optimization solvers and portfolios of these solvers with and without information sharing:
\begin{itemize}
	\item Solver 1: A self-adaptive ($\mu$,$\lambda$) evolution strategy with resampling as explained in Algorithm \ref{algosaes}, with parametrization $\lambda=10d$, $\mu=5d$, $K=10$, $\zeta=2$ (in dimension $d$). This solver will be termed $RSAES$ (resampling self-adaptive evolution strategy).
	\item Solver 2: Fabian's solver, as detailed in Algorithm \ref{algofabian}, with parametrization $\gamma=0.1$, $a=1$, $c=100$. This variant will be termed $Fabian1$.
	\item Solver 3: Another Fabian's solver with parametrization $\gamma=0.49$, $a=1$, $c=2$. This variant will be termed $Fabian2$.
	\item Solver 4: A version of Newton's solver adapted for black-box noisy optimization (gradients and Hessians are approximated on samplings of the objective function), as detailed in Algorithm \ref{algonewton}, with parametrization $B=1$, $\beta=2$, $A=100$, $\alpha=4$. For short this solver will be termed $Newton$.
	\item NOPA NL: NOPA of solvers 1-4 {\em{without lag}}. Functions are $r_n=\lceil n^{4.2} \rceil$, $\lag(n)=n$, $s_n=\lceil n^{2.2} \rceil$ at $n^{th}$ algorithm selection.
	\item NOPA: NOPA of solvers 1-4. Functions are $r_n=\lceil n^{4.2} \rceil$, $\lag(n)=\lceil n^{1/4.2} \rceil$, $s_n=\lceil n^{2.2} \rceil$ at $n^{th}$ algorithm selection.
	\item NOPA+S.: NOPA of solvers 1-4, with information sharing enabled. Same functions.
	\item INOPA: INOPA of solvers 1-4. Same functions.
	\item INOPA+S.: INOPA of solvers 1-4, with information sharing enabled. Same functions.
\end{itemize}
Roughly speaking, Fabian's algorithm is aimed at dealing with $z=0$ \cite{fabian}. RSAES is designed for multimodal and/or parallel optimization; it is also competitive in the unimodal setting when $z>0$ \cite{rolet2010adaptive,tcscauwet}. Newton's solver is excellent when there is very little noise \cite{tcscauwet}. 
Consistently with Equation \ref{eqssr}, we evaluate the slope of the linear convergence in $\log$-$\log$ scale by the logarithm of the average simple regret divided by the logarithm of the number of evaluations. 

\subsubsection{Experiments in unimodal case}\label{sec:unimodal}
The experiments presented in this section have been performed on 
\begin{equation}
f(x)=\|x\|^2+\|x\|^{z}{\cal{N}}\label{znoise}
\end{equation} with ${\cal{N}}$ a Gaussian standard noise.
$z = 2$ is the so-called multiplicative noise case, $z = 0$ is the additive noise case, $z = 1$ is intermediate.
The results in dimension 2 and dimension 15 are shown in Table \ref{newxptable} and \ref{newxptablebis}.

\begin{table}
\centering
\caption{\label{newxptable}{\color{black}Experiments on $f(x)=\|x\|^2+\|x\|^{z}{\cal{N}}$ in dimension $2$ with $z=0,1,2$. Numbers in this table are slopes (Eq. \ref{eqssr}). We see that the portfolio successfully keeps the best of each world (i.e. INOPA has nearly the same slope as the best of the solvers). Results are averaged over $50$ runs. ``$s$'' as a unit refers to ``seconds''. $Optimal$ means the optimum is reached for at least one run; the number of times the optimum was reached (over the 50 runs) is given between parentheses. The standard deviation is shown after $\pm$.} Table \ref{newxptablebis} presents the same results in dimension 15. ``NL'' refers to ``no lag'' cases, i.e., $\forall n\in \N^*,\lag(n)=n$.}
\scriptsize
\begin{tabular}{@{}c@{}@{}c@{}@{}c@{}@{}c@{}@{}c@{}@{}c@{}}
\hline
\multirow{2}{*} {Solver/Portfolio}  &
\multicolumn{5}{c}{Obtained slope for $d=2$, $z=0$} \\
\cline{2-6}
& $CT=10s$ & $CT=20s$ & $CT=40s$ & $CT=80s$ & $CT=160s$\\
\hline
$RSAES$ & $ -.391 \pm .009 $ & $ -.391 \pm .009 $ & $ -.396 \pm .010 $ & $ -.381 \pm .012 $ & $ -.394 \pm .012 $ \\
$Fabian1$ & $ {\bf{-1.188 \pm .012}} $ & $ {\bf{-1.188 \pm .011}} $ & $ {\bf{-1.217 \pm .010}} $ & $ {\bf{-1.241 \pm .013}} $ & $ {\bf{-1.265 \pm .011}} $ \\
$Fabian2$ & $ -.172 \pm .011 $ & $ -.161 \pm .009 $ & $ -.178 \pm .011 $ & $ -.212 \pm .015 $ & $ -.226 \pm .012 $ \\
$Newton$ & $ -.206 \pm .009 $ & $ -.206 \pm .009 $ & $ -.212 \pm .010 $ & $ -.237 \pm .011 $ & $ -.239 \pm .011 $ \\
NOPA NL & $ -.999 \pm .047 $ & $ -.870 \pm .061 $ & $ -.682 \pm .064 $ & $ -.748 \pm .066 $ & $ -.662 \pm .067 $ \\
NOPA+S. NL & $ -.210 \pm .012 $ & $ -.230 \pm .011 $ & $ -.243 \pm .013 $ & $ -.260 \pm .013 $ & $ -.255 \pm .015 $ \\
NOPA & $ -.897 \pm .054 $ & $ -.946 \pm .049 $ & $ -.835 \pm .059 $ & $ -.777 \pm .064 $ & $ -.932 \pm .058 $ \\
NOPA+S. & $ -.264 \pm .013 $ & $ -.298 \pm .015 $ & $ -.268 \pm .011 $ & $ -.304 \pm .018 $ & $ -.303 \pm .016 $ \\
INOPA & $ -.829 \pm .069 $ & $ -.950 \pm .062 $ & $ -.948 \pm .055 $ & $ -.913 \pm .058 $ & $ -.904 \pm .055 $ \\ 
INOPA+S. & $ -.703 \pm .058 $ & $ -.938 \pm .056 $ & $ -.844 \pm .055 $ & $ -.789 \pm .055 $ & $ -.776 \pm .055 $ \\ 
\hline
\hline
\multirow{2}{*} {Solver/Portfolio}  &
\multicolumn{5}{c}{Obtained slope for $d=2$, $z=1$} \\
\cline{2-6}
& $CT=10s$ & $CT=20s$ & $CT=40s$ & $CT=80s$ & $CT=160s$\\
\hline
$RSAES$ & $ -.526 \pm .013 $ & $ -.530 \pm .016 $ & $ -.507 \pm .012 $ & $ -.507 \pm .017 $ & $ -.522 \pm .014 $ \\
$Fabian1$ & $ -1.247 \pm .015 $ & $ -1.225 \pm .009 $ & $ -1.252 \pm .010 $ & $ -1.276 \pm .011 $ & $ -1.314 \pm .013 $ \\
$Fabian2$ & $ -1.785 \pm .009 $ & $ -1.755 \pm .011 $ & $ -1.782 \pm .015 $ & $ -1.777 \pm .011 $ & $ -1.738 \pm .010 $ \\
$Newton$ & $ {\bf{-2.649 \pm .010}} $ & $ {\bf{-2.605 \pm .008}} $ & $ {\bf{-2.600 \pm .011}} $ & $ {\bf{-2.547 \pm .011}} $ & $ -2.517 \pm .010 $ \\
NOPA NL & $ -1.624 \pm .011 $ & $ -1.600 \pm .011 $ & $ -1.593 \pm .016 $ & $ -1.554 \pm .013 $ & $ -1.533 \pm .014 $ \\
NOPA+S. NL & $ -1.225 \pm .013 $ & $ -1.228 \pm .013 $ & $ -1.281 \pm .014 $ & $ -1.298 \pm .015 $ & $ -1.323 \pm .017 $ \\
NOPA & $ -1.925 \pm .081 $ & $ -1.954 \pm .076 $ & $ -1.661 \pm .070 $ & $ -1.805 \pm .066 $ & $ -1.694 \pm .062 $ \\
NOPA+S. & $ -1.491 \pm .077 $ & $ -1.624 \pm .080 $ & $ -1.693 \pm .072 $ & $ -1.632 \pm .068 $ & $ -1.537 \pm .061 $ \\
INOPA & $ -2.271 \pm .062 $ & $ -2.330 \pm .061 $ & $ -2.478 \pm .033 $ & $ -2.506 \pm .047 $ & $ {\bf{-2.599 \pm .024}} $ \\ 
INOPA+S. & $ -2.013 \pm .070 $ & $ -1.927 \pm .074 $ & $ -1.987 \pm .074 $ & $ -2.120 \pm .081 $ & $ -1.856 \pm .078 $ \\ 
\hline
\hline
\multirow{2}{*} {Solver/Portfolio}  &
\multicolumn{5}{c}{Obtained slope for $d=2$, $z=2$} \\
\cline{2-6}
& $CT=10s$ & $CT=20s$ & $CT=40s$ & $CT=80s$ & $CT=160s$\\
\hline
$RSAES$ & $ -.500 \pm .013 $ & $ -.491 \pm .011 $ & $ -.484 \pm .011 $ & $ -.526 \pm .018 $ & $ -.537 \pm .015 $ \\
$Fabian1$ & $ -1.233 \pm .010 $ & $ -1.246 \pm .013 $ & $ -1.258 \pm .011 $ & $ -1.299 \pm .014 $ & $ -1.310 \pm .013 $ \\
$Fabian2$ & $ -3.173 \pm .010 $ & $ -3.175 \pm .009 $ & $ -3.141 \pm .008 $ & $ -3.120 \pm .013 $ & $ -3.073 \pm .011 $ \\
$Newton$ & $ -4.146 \pm .004 $ & $ -4.349 \pm .008 $ & $ -4.514 \pm .004 $ & $ -4.743 \pm .012 $ & $ -4.973 \pm .011 $ \\
NOPA NL & $ -2.911 \pm .009 $ & $ -2.871 \pm .010 $ & $ -2.796 \pm .011 $ & $ -2.770 \pm .012 $ & $ -2.717 \pm .014 $ \\
NOPA+S. NL & $ -2.919 \pm .011 $ & $ -2.818 \pm .050 $ & $ -2.785 \pm .047 $ & $ -2.684 \pm .056 $ & $ -2.762 \pm .016 $ \\
NOPA & $ -4.343 \pm .006 $ & $ -4.603 \pm .013 $ & $ -4.772 \pm .013 $ & $ {\bf{Optimal}} $ (1) & $ -5.103 \pm .011 $ \\
NOPA+S. & $ -4.305 \pm .041 $ & $ -4.573 \pm .011 $ & $ -4.431 \pm .091 $ & $ -4.910 \pm .048 $ & $ -5.020 \pm .059 $ \\
INOPA & $ {\bf{Optimal}} $ (1) & $ {\bf{Optimal}} $ (2) & $ -4.698 \pm .004 $ & $ -4.435 \pm .007 $ & $ -4.408 \pm 0 $ \\ 
INOPA+S. & $ {\bf{Optimal}} $ (1) & $ -3.302 \pm .116 $ & $ {\bf{Optimal}} $ (2) & $ -4.409 \pm .008 $ & $ {\bf{Optimal}} $ (35) \\ 
\hline
\end{tabular}
\end{table}

\begin{table}
\centering
\caption{\label{newxptablebis}{\color{black}Experiments on $f(x)=\|x\|^2+\|x\|^{z}{\cal{N}}$ in dimension $15$ with $z=0,1,2$. Numbers in this table are slopes (Eq. \ref{eqssr}). We see that the portfolio successfully keeps the best of each world (INOPA has nearly the same slope as the best). Results are averaged over $50$ runs. ``$s$'' as a unit refers to ``seconds''. $Optimal$ means the optimum is reached for at least one run; the number of times the optimum was reached (over the 50 runs) is given between parentheses. The standard deviation is shown after $\pm$. ``NL'' refers to ``no lag'' cases, i.e., $\forall n \in \N^*,\lag(n)=n$.}}
\scriptsize
\begin{tabular}{@{}c@{}@{}c@{}@{}c@{}@{}c@{}@{}c@{}@{}c@{}}
\hline
\multirow{2}{*} {Solver/Portfolio}  &
\multicolumn{5}{c}{Obtained slope for $d=15$, $z=0$} \\
\cline{2-6}
& $CT=10s$ & $CT=20s$ & $CT=40s$ & $CT=80s$ & $CT=160s$\\
\hline
$RSAES$ & $ .093 \pm .002 $ & $ .107 \pm .002 $ & $ .114 \pm .002 $ & $ .128 \pm .002 $ & $ .136 \pm .003 $ \\
$Fabian1$ & $ {\bf{-.825 \pm .003}} $ & $ {\bf{-.826 \pm .003}} $ & $ {\bf{-.838 \pm .003}} $ & $ {\bf{-.834 \pm .004}} $ & $ {\bf{-.835 \pm .003}} $ \\
$Fabian2$ & $ .096 \pm .003 $ & $ .108 \pm .003 $ & $ .108 \pm .003 $ & $ .114 \pm .003 $ & $ .125 \pm .003 $ \\
$Newton$ & $ -.055 \pm .002 $ & $ -.062 \pm .003 $ & $ -.070 \pm .003 $ & $ -.069 \pm .003 $ & $ -.071 \pm .003 $ \\
NOPA NL & $ -.512 \pm .046 $ & $ -.393 \pm .049 $ & $ -.377 \pm .048 $ & $ -.425 \pm .049 $ & $ -.380 \pm .046 $ \\
NOPA+S. NL & $ .026 \pm .008 $ & $ -.026 \pm .021 $ & $ -.082 \pm .025 $ & $ -.237 \pm .033 $ & $ -.410 \pm .028 $ \\
NOPA & $ -.757 \pm .003 $ & $ -.750 \pm .003 $ & $ -.747 \pm .003 $ & $ -.734 \pm .013 $ & $ -.705 \pm .018 $ \\
NOPA+S. & $ .039 \pm .007 $ & $ .019 \pm .013 $ & $ .016 \pm .019 $ & $ .005 \pm .024 $ & $ -.079 \pm .029 $ \\
INOPA & $ -.762 \pm .024 $ & $ -.768 \pm .024 $ & $ -.822 \pm .003 $ & $ -.821 \pm .003 $ & $ -.826 \pm .003 $ \\ 
INOPA+S. & $ -.484 \pm .033 $ & $ -.508 \pm .035 $ & $ -.575 \pm .038 $ & $ -.603 \pm .036 $ & $ -.499 \pm .037 $ \\ 
\hline
\hline
\multirow{2}{*} {Solver/Portfolio}  &
\multicolumn{5}{c}{Obtained slope for $d=15$, $z=1$} \\
\cline{2-6}
& $CT=10s$ & $CT=20s$ & $CT=40s$ & $CT=80s$ & $CT=160s$\\
\hline
$RSAES$ & $ .094 \pm .002 $ & $ .102 \pm .002 $ & $ .118 \pm .003 $ & $ .128 \pm .002 $ & $ .137 \pm .003 $ \\
$Fabian1$ & $ -.991 \pm .003 $ & $ -1.004 \pm .003 $ & $ -1.011 \pm .003 $ & $ -1.020 \pm .003 $ & $ -1.032 \pm .003 $ \\
$Fabian2$ & $ {\bf{-1.399 \pm .003}} $ & $ {\bf{-1.376 \pm .004}} $ & $ -1.339 \pm .003 $ & $ -1.313 \pm .003 $ & $ -1.274 \pm .004 $ \\
$Newton$ & $ -.793 \pm .099 $ & $ -.787 \pm .095 $ & $ -.959 \pm .092 $ & $ -.837 \pm .086 $ & $ -.875 \pm .078 $ \\
NOPA NL & $ -1.226 \pm .003 $ & $ -1.167 \pm .012 $ & $ -.978 \pm .013 $ & $ -.949 \pm .008 $ & $ -.943 \pm .005 $ \\
NOPA+S. NL & $ -.771 \pm .058 $ & $ -.869 \pm .065 $ & $ -.839 \pm .068 $ & $ -.860 \pm .060 $ & $ -.756 \pm .052 $ \\
NOPA & $ -.980 \pm .018 $ & $ -.962 \pm .013 $ & $ -.937 \pm .005 $ & $ -.941 \pm .005 $ & $ -.943 \pm .004 $ \\
NOPA+S. & $ -1.012 \pm .020 $ & $ -1.029 \pm .025 $ & $ -1.019 \pm .021 $ & $ -1.002 \pm .014 $ & $ -.951 \pm .010 $ \\
INOPA & $ -1.114 \pm .016 $ & $ -1.268 \pm .026 $ & $ -1.359 \pm .027 $ & $ -1.393 \pm .018 $ & $ {\bf{-1.482 \pm .026}} $ \\ 
INOPA+S. & $ -1.194 \pm .030 $ & $ -1.250 \pm .038 $ & $ {\bf{-1.556 \pm .030}} $ & $ {\bf{-1.441 \pm .041}} $ & $ -1.399 \pm .051 $ \\ 
\hline
\hline
\multirow{2}{*} {Solver/Portfolio}  &
\multicolumn{5}{c}{Obtained slope for $d=15$, $z=2$} \\
\cline{2-6}
& $CT=10s$ & $CT=20s$ & $CT=40s$ & $CT=80s$ & $CT=160s$\\
\hline
$RSAES$ & $ .094 \pm .003 $ & $ .102 \pm .002 $ & $ .113 \pm .003 $ & $ .125 \pm .003 $ & $ .146 \pm .002 $ \\
$Fabian1$ & $ -.991 \pm .003 $ & $ -1.000 \pm .003 $ & $ -1.016 \pm .003 $ & $ -1.019 \pm .003 $ & $ -1.037 \pm .004 $ \\
$Fabian2$ & $ -2.595 \pm .003 $ & $ -2.546 \pm .003 $ & $ -2.481 \pm .003 $ & $ -2.413 \pm .003 $ & $ -2.337 \pm .004 $ \\
$Newton$ & $ -2.911 \pm .279 $ & $ -2.763 \pm .291 $ & $ -2.503 \pm .285 $ & $ -2.420 \pm .265 $ & $ -2.614 \pm .240 $ \\
NOPA NL & $ -2.257 \pm .002 $ & $ -2.184 \pm .003 $ & $ -2.106 \pm .003 $ & $ -2.000 \pm .003 $ & $ -1.891 \pm .003 $ \\
NOPA+S. NL & $ -1.220 \pm .117 $ & $ -1.690 \pm .134 $ & $ -2.181 \pm .175 $ & $ -2.131 \pm .185 $ & $ -2.307 \pm .157 $ \\
NOPA & $ -2.956 \pm .121 $ & $ -2.664 \pm .107 $ & $ -2.515 \pm .095 $ & $ -2.466 \pm .090 $ & $ -2.025 \pm .050 $ \\
NOPA+S. & $ {\bf{-3.996 \pm .029}} $ & $ {\bf{-3.796 \pm .003}} $ & $ {\bf{-3.567 \pm .004}} $ & $ -3.294 \pm .003 $ & $ -2.947 \pm .026 $ \\
INOPA & $ -3.005 \pm .106 $ & $ -3.157 \pm .123 $ & $ -3.319 \pm .135 $ & $ {\bf{-3.528 \pm .144}} $ & $ {\bf{-3.751 \pm .136}} $ \\ 
INOPA+S. & $ -3.090 \pm .003 $ & $ -2.942 \pm .003 $ & $ -2.791 \pm .004 $ & $ -2.673 \pm .002 $ & $ -2.574 \pm .003 $ \\ 
\hline
\end{tabular}
\end{table}

We see on these experiments that:
\begin{itemize}
\item For $z=2$ the noise-handling version of Newton's algorithm, $Newton$, performs best among the individual solvers.
\item For $z=1$ the noise-handling version of Newton's algorithm, $Newton$, performs best in dimension $2$ and the second variant of Fabian's algorithm, $Fabian2$, performs best in  dimension $15$.
\item For $z=0$ the first variant of Fabian's algorithm, $Fabian1$, performs best (consistently with \cite{fabian}). 
\item The portfolio algorithm successfully reaches almost the same slope as the best of its solvers and sometimes outperforms all of them.
\item Portfolio with {\em{lag}} performs better than without {\em{lag}}.
\item In the case of small noise, NOPA with information sharing, termed NOPA+S., performs better than without information sharing, NOPA, in dimension $15$.
\item Results clearly show the superiority of INOPA over NOPA.
\end{itemize}
Incidentally, the poor behavior of RSAES on such a smooth case is not a surprise. Other experiments in Section \ref{sec:multimodal} show that in multimodal cases, RSAES is by far the most efficient solver among solvers above.

\subsubsection{Experiments in a multimodal setting}\label{sec:multimodal}
Experiments have been performed on a Cartpole control problem with neural network controller. The controller is a feed-forward neural network with one hidden layer of neurons. We use the same solvers as in Section \ref{sec:unimodal}.
The results are shown in Table \ref{tab:cartpole}. 
\begin{table}
\centering
\caption{\label{tab:cartpole} {\color{black}Slope of simple regret for control of the ``Cartpole'' problem using a Neural Network policy with different numbers of neurons. This test case is multimodal. These results are averaged over $50$ runs. ``$s$'' as a unit refers to ``seconds''. The standard deviation is shown after $\pm$ and shows the statistical significance of the results. Values close to $0$ correspond to cases with no convergence to the optimum, i.e., a slope zero means that the $\log$-$\log$ curve is horizontal. The test case is the one from \cite{cartpolemcts,couetouxphd}. Consistently with these references, the optimal fitness is zero.}}
\scriptsize
\begin{tabular}{@{}c@{}@{}c@{}@{}c@{}@{}c@{}@{}c@{}@{}c@{}}
\hline
\multirow{2}{*} {Solver/Portfolio}  &
\multicolumn{5}{c}{Obtained slope with $2$ neurons} \\
\cline{2-6}
& $CT=10s$ & $CT=20s$ & $CT=40s$ & $CT=80s$ & $CT=160s$\\
\hline
 $RSAES$ & $ -.503 \pm .008 $ & $ -.503 \pm .008 $ & $ -.483 \pm .007 $ & $ {\bf{-.469 \pm .006}} $ & $ {\bf{-.465 \pm .003}} $  \\
 $Fabian1$ & $ .002 \pm 0 $ & $ .002 \pm 0 $ & $ .002 \pm 0 $ & $ .002 \pm 0 $ & $ .002 \pm 0 $  \\
 $Fabian2$ & $ .002 \pm 0 $ & $ .002 \pm 0 $ & $ .002 \pm 0 $ & $ .002 \pm 0 $ & $ .002 \pm 0 $  \\
 $Newton$ & $ .002 \pm 0 $ & $ .002 \pm 0 $ & $ .002 \pm 0 $ & $ .002 \pm 0 $ & $ .002 \pm 0 $  \\
 NOPA NL & $ -.442 \pm .014 $ & $ -.469 \pm .010 $ & $ -.465 \pm .006 $ & $ -.452 \pm .005 $ & $ -.433 \pm .006 $  \\
 NOPA+S. NL & $ -.399 \pm .020 $ & $ -.465 \pm .009 $ & $ -.434 \pm .015 $ & $ -.461 \pm .007 $ & $ -.462 \pm .005 $  \\
 NOPA & $ -.480 \pm .014 $ & $ -.465 \pm .009 $ & $ -.466 \pm .008 $ & $ -.430 \pm .013 $ & $ -.431 \pm .009 $  \\
 NOPA+S. & $ -.461 \pm .017 $ & $ -.436 \pm .020 $ & $ -.475 \pm .011 $ & $ -.431 \pm .015 $ & $ -.415 \pm .015 $  \\
 INOPA & $ -.501 \pm .009 $ & $ -.468 \pm .009 $ & $ -.458 \pm .007 $ & $ -.445 \pm .006 $ & $ -.424 \pm .006 $  \\
 INOPA+S. & $ {\bf{-.524 \pm .011}} $ & $ {\bf{-.522 \pm .007}} $ & $ {\bf{-.490 \pm .006}} $ & $ {\bf{-.469 \pm .006}} $ & $ -.459 \pm .006 $  \\
\hline
\hline
\multirow{2}{*} {Solver/Portfolio}  &
\multicolumn{5}{c}{Obtained slope with $4$ neurons} \\
\cline{2-6}
& $CT=10s$ & $CT=20s$ & $CT=40s$ & $CT=80s$ & $CT=160s$\\
\hline
 $RSAES$ & $ {\bf{-.517 \pm .009}} $ & $ -.503 \pm .006 $ & $ -.481 \pm .006 $ & $ -.458 \pm .007 $ & $ -.452 \pm .004 $  \\
 $Fabian1$ & $ .002 \pm 0 $ & $ .002 \pm 0 $ & $ .002 \pm 0 $ & $ .002 \pm 0 $ & $ .002 \pm 0 $  \\
 $Fabian2$ & $ .002 \pm 0 $ & $ -.007 \pm .010 $ & $ -.016 \pm .013 $ & $ -.006 \pm .008 $ & $ -.005 \pm .007 $  \\
 $Newton$ & $ .002 \pm 0 $ & $ .002 \pm 0 $ & $ .002 \pm 0 $ & $ .002 \pm 0 $ & $ .002 \pm 0 $  \\
 NOPA NL & $ -.485 \pm .010 $ & $ -.465 \pm .011 $ & $ -.461 \pm .007 $ & $ -.460 \pm .004 $ & $ -.440 \pm .004 $  \\
 NOPA+S. NL & $ -.474 \pm .010 $ & $ -.487 \pm .008 $ & $ -.490 \pm .007 $ & $ {\bf{-.474 \pm .006}} $ & $ {\bf{-.463 \pm .004}} $  \\
 NOPA & $ -.491 \pm .009 $ & $ -.477 \pm .006 $ & $ -.459 \pm .007 $ & $ -.434 \pm .006 $ & $ -.423 \pm .006 $  \\
 NOPA+S. & $ -.505 \pm .010 $ & $ -.504 \pm .009 $ & $ {\bf{-.491 \pm .007}} $ & $ -.470 \pm .006 $ & $ -.452 \pm .006 $  \\
 INOPA & $ -.481 \pm .012 $ & $ -.480 \pm .007 $ & $ -.429 \pm .010 $ & $ -.423 \pm .008 $ & $ -.408 \pm .005 $  \\
 INOPA+S. & $ -.506 \pm .009 $ & $ {\bf{-.506 \pm .008}} $ & $ -.479 \pm .007 $ & $ -.466 \pm .006 $ & $ -.439 \pm .005 $  \\
\hline
\hline
\multirow{2}{*} {Solver/Portfolio}  &
\multicolumn{5}{c}{Obtained slope with $6$ neurons} \\
\cline{2-6}
& $CT=10s$ & $CT=20s$ & $CT=40s$ & $CT=80s$ & $CT=160s$\\
\hline
 $RSAES$ & $ -.496 \pm .008 $ & $ {\bf{-.508 \pm .008}} $ & $ -.479 \pm .007 $ & $ -.462 \pm .004 $ & $ -.439 \pm .005 $  \\
 $Fabian1$ & $ .002 \pm 0 $ & $ .002 \pm 0 $ & $ .002 \pm 0 $ & $ .002 \pm 0 $ & $ .002 \pm 0 $  \\
 $Fabian2$ & $ .002 \pm 0 $ & $ .002 \pm 0 $ & $ .002 \pm 0 $ & $ .002 \pm 0 $ & $ .002 \pm 0 $  \\
 $Newton$ & $ .002 \pm 0 $ & $ .002 \pm 0 $ & $ .002 \pm 0 $ & $ .002 \pm 0 $ & $ .002 \pm 0 $  \\
 NOPA NL & $ -.483 \pm .008 $ & $ -.477 \pm .009 $ & $ -.464 \pm .005 $ & $ -.447 \pm .005 $ & $ -.432 \pm .005 $  \\
 NOPA+S. NL & $ -.489 \pm .011 $ & $ -.496 \pm .007 $ & $ -.488 \pm .005 $ & $ {\bf{-.469 \pm .005}} $ & $ -.464 \pm .005 $  \\
 NOPA & $ -.492 \pm .021 $ & $ -.498 \pm .007 $ & $ -.477 \pm .012 $ & $ -.464 \pm .011 $ & $ -.456 \pm .004 $  \\
 NOPA+S. & $ -.430 \pm .026 $ & $ -.446 \pm .020 $ & $ -.452 \pm .014 $ & $ -.442 \pm .016 $ & $ -.417 \pm .014 $  \\
 INOPA & $ -.501 \pm .010 $ & $ -.485 \pm .010 $ & $ -.487 \pm .006 $ & $ -.463 \pm .006 $ & $ -.447 \pm .003 $  \\
 INOPA+S. & $ {\bf{-.517 \pm .009}} $ & $ -.501 \pm .010 $ & $ {\bf{-.503 \pm .006}} $ & $ -.468 \pm .005 $ & $ {\bf{-.467 \pm .003}} $  \\
\hline
\hline
\multirow{2}{*} {Solver/Portfolio}  &
\multicolumn{5}{c}{Obtained slope with $8$ neurons} \\
\cline{2-6}
& $CT=10s$ & $CT=20s$ & $CT=40s$ & $CT=80s$ & $CT=160s$\\
\hline
 $RSAES$ & $ -.493 \pm .009 $ & $ -.475 \pm .006 $ & $ -.449 \pm .006 $ & $ -.436 \pm .007 $ & $ -.420 \pm .005 $  \\
 $Fabian1$ & $ .002 \pm 0 $ & $ .002 \pm 0 $ & $ .002 \pm 0 $ & $ .002 \pm 0 $ & $ .002 \pm 0 $  \\
 $Fabian2$ & $ .002 \pm 0 $ & $ .002 \pm 0 $ & $ .002 \pm 0 $ & $ -.005 \pm .007 $ & $ .002 \pm 0 $  \\
 $Newton$ & $ .002 \pm 0 $ & $ .002 \pm 0 $ & $ .002 \pm 0 $ & $ .002 \pm 0 $ & $ .002 \pm 0 $  \\
 NOPA NL & $ -.464 \pm .010 $ & $ -.468 \pm .007 $ & $ -.431 \pm .008 $ & $ -.429 \pm .006 $ & $ -.419 \pm .006 $  \\
 NOPA+S. NL & $ -.463 \pm .008 $ & $ -.480 \pm .009 $ & $ -.485 \pm .008 $ & $ -.466 \pm .006 $ & $ -.453 \pm .005 $  \\
 NOPA & $ -.483 \pm .011 $ & $ -.485 \pm .009 $ & $ -.475 \pm .006 $ & $ -.465 \pm .005 $ & $ -.436 \pm .005 $  \\
 NOPA+S. & $ -.510 \pm .009 $ & $ {\bf{-.498 \pm .008}} $ & $ {\bf{-.508 \pm .006}} $ & $ {\bf{-.482 \pm .005}} $ & $ -.454 \pm .006 $  \\
 INOPA & $ -.488 \pm .010 $ & $ -.463 \pm .009 $ & $ -.455 \pm .007 $ & $ -.422 \pm .007 $ & $ -.426 \pm .006 $  \\
 INOPA+S. & $ {\bf{-.523 \pm .008}} $ & $ -.492 \pm .009 $ & $ -.476 \pm .007 $ & $ -.459 \pm .007 $ & $ {\bf{-.460 \pm .004}} $  \\
\hline
\end{tabular}
\end{table}             

We see on these experiments that:
\begin{itemize}
\item $RSAES$ is the most efficient individual solver.
\item The portfolio algorithm successfully reaches almost the same slope as the best of its solvers.
\item Sometimes, the portfolio outperforms the best of its solvers.
\item Results clearly show the superiority of INOPA over NOPA.
\end{itemize}

\subsection{The {\em{lag}}: experiments with different variants of Fabian's algorithm}\label{sec:samalgo}

In this section, we check if the version with {\em{lag}} disabled (i.e., $\forall n\in \N^*,\lag(n)=n$) can compete with the version with {\em{lag}} enabled (i.e., $\forall n\in \N^*,\lag(n)<n$). In previous experiments this was the case, we here focus on a case in which solvers are close to each other and check if in such a case the {\em{lag}} is beneficial.

Fabian's algorithm \cite{fabian} is a gradient descent algorithm using finite differences for approximating gradients. $\frac{a}{n}$ is the step in updates, i.e., the current estimate is updated by adding $-\frac{a}{n}\nabla f$ where $\nabla f$ is the approximate gradient. $\nabla f$ is approximated by averaging multiple redundant estimates, each of them by finite differences of size $\Theta(c/n^\gamma)$. 
Therefore, Fabian's algorithm has $3$ parameters, termed $a$, $c$ and $\gamma$. In the case of approximately quadratic functions with additive noise, Fabian's algorithm can obtain a good $s(SR)$ with small $\gamma >0$. However, $a$ and $c$ have an important non-asymptotic effect and the tuning of these $a$ and $c$ parameters is challenging. A portfolio of variants of Fabian's algorithm can help to overcome the tedious parameter tuning.

For these experiments, we consider the same noisy function as in Section \ref{sec:unimodal}. 
We use $5$ noisy optimization solvers which are variants of Fabian's algorithm, as detailed in Algorithm \ref{algofabian}, and portfolio of these solvers with and without {\em{lag}}:
\begin{itemize}
	\item Solver 1: $Fabian1$ as used in Section \ref{sec:diffalgo}.
	\item Solver 2: Fabian's solver with parametrization $\gamma=0.1$, $a=5$, $c=100$.
	\item Solver 3: Fabian's solver with parametrization $\gamma=0.1$, $a=1$, $c=200$.
	\item Solver 4: Fabian's solver with parametrization $\gamma=0.1$, $a=1$, $c=1$.
	\item Solver 5: Fabian's solver with parametrization $\gamma=0.1$, $a=1$, $c=10$.
	\item NOPA NL: Portfolio of solvers 1-5 {\em{without lag}}. Functions are $r_n=\lceil n^{4.2} \rceil$, $\lag(n)=n$, $s_n=\lceil n^{2.2} \rceil$ at $n^{th}$ algorithm selection.
	\item NOPA: NOPA of solvers 1-5. Functions are $r_n=\lceil n^{4.2} \rceil$, $\lag(n)=\lceil n^{1/4.2} \rceil$, $s_n=\lceil n^{2.2} \rceil$ at $n^{th}$ algorithm selection.
	\item NOPA+S.: NOPA of solvers 1-5, with information sharing enabled. Same functions.
	\item INOPA: INOPA of solvers 1-5. Same functions.
	\item INOPA+S.: INOPA of solvers 1-5, with information sharing enabled. Same functions.
\end{itemize}
Experiments have been performed in dimension $2$ and dimension $15$. These $5$ variants of Fabian's algorithm have asymptotically similar performance.
Table \ref{tab:xpnopalag} compares the portfolio above without {\em{lag}}, NOPA and INOPA.

{We see on these experiments that:
\begin{itemize}
\item The {\em{lag}} is usually beneficial, though this is not always the case.
\item Again, INOPA clearly outperforms NOPA.
\end{itemize}}

\begin{table}
\centering
\caption{\label{tab:xpnopalag}Experiments on $f(x)=\|x\|^2+\|x\|^{z}{\cal{N}}$ in dimension $2$ and dimension $15$ with $z=0,1,2$. Results are mean of $1000$ runs. Solvers are various parametrizations of Fabian's algorithm (see text). ``$s$'' as a unit refers to ``seconds''. The standard deviation is shown after $\pm$ and shows the statistical significance of the results. We use smaller time settings - this is because here the objective function has a negligible computation time.}
\scriptsize
\begin{tabular}{@{}c@{}@{}c@{}@{}c@{}@{}c@{}@{}c@{}@{}c@{}}
\hline
\multirow{2}{*} {Portfolio}  &
\multicolumn{5}{c}{obtained slope for $d=2$} \\
\cline{2-6}
& $  CT=0.05s $ & $ CT=0.1s $ & $ CT=0.2s $ & $ CT=0.4s $ & $ CT=0.8s $  \\
\hline
\multicolumn{6}{c}{$z=0$} \\
\hline
NOPA NL & $ -1.157 \pm .009 $ & $ -1.223 \pm .010 $ & $ -1.146 \pm .009 $ & $ -1.109 \pm .009 $ & $ -1.043 \pm .009 $ \\ 
NOPA+S. NL & $ -1.030 \pm .009 $ & $ -1.002 \pm .011 $ & $ -.807 \pm .010 $ & $ -.839 \pm .009 $ & $ -.774 \pm .007 $ \\ 
NOPA & $ -1.255 \pm .009 $ & $ -1.203 \pm .009 $ & $ -1.156 \pm .008 $ & $ -1.145 \pm .007 $ & $ -1.152 \pm .007 $ \\ 
NOPA+S. & $ -1.030 \pm .009 $ & $ -1.044 \pm .008 $ & $ -.995 \pm .009 $ & $ -.963 \pm .008 $ & $ -.926 \pm .007 $ \\ 
INOPA & $ {\bf{-1.289 \pm .010}} $ & $ -1.247 \pm .008 $ & $ {\bf{-1.201 \pm .008}} $ & $ {\bf{-1.188\pm0.008}} $ & $  {\bf{-1.153\pm0.007}} $ \\ 
INOPA+S. & $ -1.246 \pm .010 $ & $ {\bf{-1.266 \pm .008}} $ & $ -1.182 \pm .010 $ & $  -1.135\pm0.010 $ & $ -1.113\pm0.009 $ \\ 
\hline
\multicolumn{6}{c}{$z=1$} \\
\hline
NOPA NL & $ -1.529 \pm .005 $ & $ -1.471 \pm .005 $ & $ -1.455 \pm .004 $ & $ -1.414 \pm .004 $ & $ -1.381 \pm .004 $ \\ 
NOPA+S. NL & $ -1.436 \pm .006 $ & $ -1.401 \pm .006 $ & $ -1.287 \pm .006 $ & $ -1.225 \pm .005 $ & $ -1.129 \pm .005 $ \\ 
NOPA & $ -1.543 \pm .005 $ & $ -1.490 \pm .004 $ & $ -1.456 \pm .004 $ & $ -1.416 \pm .003 $ & $ -1.377 \pm .003 $ \\ 
NOPA+S. & $ -1.469 \pm .005 $ & $ -1.462 \pm .005 $ & $ -1.377 \pm .005 $ & $ -1.339 \pm .005 $ & $ -1.301 \pm .004 $ \\ 
INOPA & $ {\bf{-1.656 \pm .004}} $ & $ {\bf{-1.578 \pm .005}} $ & $ {\bf{-1.531 \pm .004}} $ & $ {\bf{-1.497 \pm .005}} $ & $ {\bf{-1.443 \pm .004}} $ \\ 
INOPA+S. & $ -1.638 \pm .005 $ & $ -1.568 \pm .005 $ & $ -1.503 \pm .005 $ & $ -1.474 \pm .005 $ & $ -1.425 \pm .005 $ \\ 
\hline
\multicolumn{6}{c}{$z=2$} \\
\hline
NOPA NL & $ -1.528 \pm .004 $ & $ -1.505 \pm .005 $ & $ -1.440 \pm .004 $ & $ -1.394 \pm .004 $ & $ -1.375 \pm .003 $ \\ 
NOPA+S. NL & $ -1.456 \pm .005 $ & $ -1.393 \pm .006 $ & $ -1.303 \pm .005 $ & $ -1.250 \pm .005 $ & $ -1.168 \pm .005 $ \\ 
NOPA & $ -1.540 \pm .005 $ & $ -1.529 \pm .004 $ & $ -1.439 \pm .004 $ & $ -1.422 \pm .004 $ & $ -1.384 \pm .003 $ \\ 
NOPA+S. & $ -1.473 \pm .006 $ & $ -1.450 \pm .005 $ & $ -1.371 \pm .004 $ & $ -1.339 \pm .004 $ & $ -1.303 \pm .004 $ \\ 
INOPA & $ {\bf{-1.681 \pm .005}} $ & $ {\bf{-1.607 \pm .004}} $ & $ {\bf{-1.530 \pm .005}} $ & $ {\bf{-1.497 \pm .004}} $ & $ {\bf{-1.439 \pm .005}} $ \\ 
INOPA+S. & $ -1.578 \pm .006 $ & $ -1.570 \pm .006 $ & $ -1.517 \pm .005 $ & $ -1.465 \pm .005 $ & $  -1.434 \pm 0.005  $ \\ 
\hline
\hline
 \multirow{2}{*} {Portfolio}  &
\multicolumn{5}{c}{obtained slope for $d=15$} \\
\cline{2-6}
& $  CT=0.05s $ & $ CT=0.1s $ & $ CT=0.2s $ & $ CT=0.4s $ & $ CT=0.8s $  \\
\hline
\multicolumn{6}{c}{$z=0$} \\
\hline
NOPA NL & $ -.673 \pm .001 $ & $ -.688 \pm .001 $ & $ -.699 \pm .001 $ & $ -.761 \pm .002 $ & $ -.779 \pm .002 $ \\ 
NOPA+S. NL & $ -.664 \pm .001 $ & $ -.684 \pm .001 $ & $ -.703 \pm .001 $ & $ -.716 \pm .001 $ & $ -.750 \pm .001 $ \\ 
NOPA & $ -.700 \pm .006 $ & $ -.609 \pm .006 $ & $ -.667 \pm .004 $ & $ -.681 \pm .005 $ & $ -.694 \pm .004 $ \\ 
NOPA+S. & $ -.591 \pm .006 $ & $ -.515 \pm .006 $ & $ -.514 \pm .005 $ & $ -.519 \pm .004 $ & $ -.527 \pm .004 $ \\ 
INOPA & $ {\bf{-.839 \pm .001}} $ & $ {\bf{-.839 \pm .001}} $ & $ {\bf{-.841 \pm .001}} $ & $ {\bf{-.840 \pm .001}} $ & $ -.839 \pm .001 $ \\ 
INOPA+S. & $ {\bf{-.839 \pm .001}} $ & $ {\bf{-.839 \pm .001}} $ & ${\bf{ -.841 \pm .001}} $ & $ -.839 \pm .001 $ & $ {\bf{-.841 \pm .001}} $ \\ 
\hline
\multicolumn{6}{c}{$z=1$} \\
\hline
NOPA NL & $ -1.004 \pm .001 $ & $ -.991 \pm .001 $ & $ -.980 \pm .001 $ & $ -.978 \pm .001 $ & $ -1.062 \pm .001 $ \\ 
NOPA+S. NL & $ -1.000 \pm .001 $ & $ -.985 \pm .001 $ & $ -.980 \pm .001 $ & $ -.990 \pm .001 $ & $ -1.066 \pm .001 $ \\ 
NOPA & $ -1.154 \pm .001 $ & $ -1.140 \pm .001 $ & $ -1.117 \pm .001 $ & $ -1.100 \pm .001 $ & $ -1.086 \pm .001 $ \\ 
NOPA+S. & $ -1.160 \pm .001 $ & $ -1.133 \pm .001 $ & $ -1.109 \pm .001 $ & $ -1.084 \pm .001 $ & $ -1.065 \pm .001 $ \\ 
INOPA & $ -1.231 \pm .001 $ & $ {\bf{-1.249 \pm .001}} $ & $ {\bf{-1.238 \pm .001}} $ & $ {\bf{-1.218 \pm .001}} $ & $ {\bf{-1.200 \pm .001}} $ \\ 
INOPA+S. & $ {\bf{-1.242 \pm .001}} $ & $ -1.198 \pm .003 $ & $ -1.169 \pm .003 $ & $ -1.151 \pm .002 $ & $ -1.131 \pm .003 $ \\ 
\hline
\multicolumn{6}{c}{$z=2$} \\
\hline
NOPA NL & $ -.999 \pm .001 $ & $ -.995 \pm .001 $ & $ -.981 \pm .001 $ & $ -.980 \pm .001 $ & $ -1.065 \pm .001 $ \\ 
NOPA+S. NL & $ -.999 \pm .001 $ & $ -.979 \pm .001 $ & $ -.973 \pm .001 $ & $ -.987 \pm .001 $ & $ -1.064 \pm .001 $ \\ 
NOPA & $ -1.174 \pm .001 $ & $ -1.135 \pm .001 $ & $ -1.119 \pm .001 $ & $ -1.101 \pm .001 $ & $ -1.083 \pm .001 $ \\ 
NOPA+S. & $ -1.152 \pm .002 $ & $ -1.130 \pm .001 $ & $ -1.103 \pm .001 $ & $ -1.080 \pm .001 $ & $ -1.061 \pm .001 $ \\ 
INOPA & $ {\bf{-1.234 \pm .001}} $ & $ {\bf{-1.251 \pm .001}} $ & $ {\bf{-1.237 \pm .001}} $ & $ {\bf{-1.219 \pm .001}} $ & $ {\bf{-1.197 \pm .001}} $ \\ 
INOPA+S. & $ -1.085 \pm .001 $ & $ -1.085 \pm .001 $ & $ -1.083 \pm .001 $ & $ -1.083 \pm .001 $ & $ -1.085 \pm .001 $ \\ 
\hline
\end{tabular}
\end{table}

{\color{black}
\subsection{Discussion of experimental results}
In short, experiments 
\begin{itemize}
	\item validate the use of portfolio (almost as good as the best solver, and sometimes better thanks to its inherent mitigation of ``bad luck runs''); we incidentally provide, with INOPA applied to several independent copies of a same solver, a principled tool for restarts for noisy optimization;
	\item validate the improvement provided by unfair budget, as shown by the improvement of INOPA vs NOPA (when no sharing is applied, i.e. in the context in which our mathematical results are proved) - more precisely, we get either very similar results (in Table \ref{tab:cartpole} and for $z=0$ or $z=2$ in Table \ref{newxptable}, INOPA and NOPA have essentially the same behavior), or a consistent improvement of INOPA vs NOPA ($z=1$ in Table \ref{newxptable} and Tables \ref{newxptablebis}, \ref{tab:xpnopalag});
	\item are less conclusive in terms of comparison ``with {\em{lag}} / without {\em{lag}}'', though on average {\em{lag}} is seemingly beneficial.
\end{itemize}
}

\section{Conclusion}\label{sec:conc}

We have seen that noisy optimization provides a very natural framework for portfolio methods. Different noisy optimization algorithms have extremely different convergence rates (different slopes) on different test cases, depending on the noise level, on the multimodalities, on the dimension (see e.g. Tables \ref{newxptable} and \ref{tab:xpnopalag}, where depending on $z$ the best solver is a variant of Fabian or Newton's algorithm; and Table \ref{tab:cartpole}, where RSAES is the best); see also \cite{liu2014meta} for {\color{black}experiments on} additional multimodal test cases. We proposed two versions of such portfolios, NOPA and INOPA, the latter using an unfair distribution of the budget. Both have theoretically the same slope as the best of their solvers, with better constants for INOPA (in particular, no shift, if $SubSetOptim$ (see Eq. \ref{ssopt}) has cardinal $1$).

We show mathematically 
an asymptotic $\log(M)$ shift when using $M$ solvers, when working on a classical $\log$-$\log$ scale (classical in noisy optimization); see Section \ref{sec:logMshift}. Contrarily to noise-free optimization (where a $\log(M)$ shift would be a trivial result), such a shift is not so easily obtained in noisy optimization. 
Importantly, it is necessary (Section \ref{neclag}), for getting the $\log(M)$ shift, that:
\begin{itemize}
\item the AS algorithm compares {\em{old}} recommendations (and selects a solver from this point of view); 
\item the portfolio recommends the {\em{current}} recommendation of this selected solver.
\end{itemize}
Additionally, we improve the bound to a $\log(M')$ shift, where $M'$ is the number of optimal solvers, using an unfair distribution of the computational budget (Section \ref{logmp}). In particular, the shift is asymptotically negligible when the optimal solver is unique. 

A careful choice of portfolio parameters (function $\lag(\cdot)$, specifying the {\em{lag}}; $r_n$, specifying the {\color{black}intervals $r_{n+1}-r_n$} between two comparisons of solvers; $s_n$, specifying the number of resamplings of recommendations for selecting the best) leads to such properties; we provide principled tools for choosing these parameters. 
Sufficient conditions are given in Theorem \ref{rnopa}, with examples thereafter.

Experiments show (i) the efficiency of portfolios for noisy optimization, as solvers have very different performances for different test cases and NOPA has performance close to the best or even better when the random initialization has a big impact; (ii) the clear and stable improvement provided by INOPA, thanks to an unfair budget distribution; (iii) that the {\em{lag}} is usually beneficial, though this is not always the case. {\color{black}Importantly, without lag, INOPA could not be defined.}

In noisy frameworks, we point out that portfolios might make sense even when optimizers are not orthogonal. Even with identical solvers, or closely related optimizers, the portfolio can mitigate the effect of unlucky random contributions. This is somehow related to restarts (i.e. multiple runs with random initializations). See Table \ref{tab:xpnopalag} for cases with very close solvers, and \cite{liu2014meta} with identical solvers.

Sharing information in portfolios of noisy optimization algorithms is not so easy. Our empirical results are mitigated; but we only tested very simple tools for sharing - just sharing the current best point. A further work consists in identifying better relevant information for sharing; maybe the estimate of the asymptotic fitness value of a solver is the most natural information for sharing; if a fitness value $A$ is already found and a solver claims that it will never do better than $A$, then we can safely stop its run and save up computational power. 

\bibliographystyle{splncs}
\bibliography{test}

\def\pasavantacceptation{
}
\newpage
\appendix
\section{Appendix: Noisy optimization algorithms}\label{newton}

We present briefly several noisy optimization algorithms. 
{\color{black}Algorithm \ref{algosaes} is a classical Self Adaptive-($\mu$,$\lambda$)-Evolution Strategy, with noise handled by resamplings. Algorithm \ref{algofabian} is a stochastic gradient method, with gradient estimated by finite differences; it is known to converge with simple regret $O(1/n)$ on smooth enough functions corrupted by additive noise \cite{fabian,shamir}. Algorithm \ref{algonewton} extends Fabian's algorithm by adding second-order information, by approximating the Hessian \cite{fabian2}.}

\begin{algorithm}
{\color{black}
\begin{algorithmic}[1]
\Require{dimension $d\in \N^*$}
\Require{population size $\lambda\in \N^*$ and number of parents $\mu \in \N^*$ with $\lambda \ge \mu$}
\Require{$K>0$}	\Comment{parameter used to compute resampling number}
\Require{$\zeta\geq 0$}	\Comment{parameter used to compute resampling number}
\Require{an initial parent $x_{1,i}\in\R^d$ and an initial $\sigma_{1,i}=1$, $i \in \{1,\dots,\mu\}$} 
\State{$n\leftarrow 1$}
\State{$\tilde x\leftarrow x_{1,1}$}\Comment{recommendation}
\While{(true)}
	\State{Generate $\lambda$ individuals $i_j$, $j\in\{1,\dots,\lambda\}$, independently using} \Comment{offspring}
	\begin{eqnarray}
		\sigma_j=\sigma_{n,mod(j-1,\mu)+1}\times\exp\left(\frac{\cal{N}}{2d}\right)\nonumber \text{ and } \ i_j=x_{n,mod(j-1,\mu)+1} + \sigma_{j}{\cal{N}}\nonumber
	\end{eqnarray}
        \State{Evaluate each of them $\lceil Kn^\zeta \rceil$ times and average their fitness values}
        \State{Define $j_1,\dots,j_\lambda$ so that }	\Comment{ranking}
		$$\hat \E_{\lceil Kn^\zeta \rceil}[f(i_{j_1})]\leq \hat \E_{\lceil Kn^\zeta \rceil}[f(i_{j_2})]\dots \leq \hat \E_{\lceil Kn^\zeta \rceil}[f(i_{j_{\lambda}})]$$
where $\hat \E_m$ denotes the average over $m$ resamplings
        \State{Compute ${x_{n+1,k}}$ and $\sigma_{n+1,k}$ using}\Comment{update}
	\begin{eqnarray*}
		\sigma_{n+1,k}={\sigma_{j_{k}}}
               \text{\ \ and\ \ } {x_{n+1,k}}=i_{j_{k}},\ k \in \{1,\dots,\mu\}
	\end{eqnarray*}
	\State{$\tilde x=i_{j_1}$} \Comment{update recommendation}
        \State{$n\leftarrow n+1$}
\EndWhile
\end{algorithmic}
\caption{\label{algosaes}Self-adaptive Evolution Strategy with resamplings. ${\cal{N}}$ denotes some independent standard Gaussian random variable and $c=mod(a,b)$ for $b>0$ means $\exists k \in \Z, (a-c)=bk \mbox{ and } 0\leq c<b$.}
}
\end{algorithm}

\begin{algorithm}
{\color{black}
\begin{algorithmic}[1]
\Require{dimension $d\in \N^*$}
\Require{$\frac12>\gamma>0$, $a>0$, $c>0$, even number of samples per axis $s$}
\Require{scales $1\ge u_1>\dots>u_{\frac{s}{2}}>0$, weights $w_1>\dots>w_{\frac{s}{2}}$ summing to 1}
\Require{an initial $x_1\in\R^d$}
\State{$n \leftarrow 1$}
\State{$\tilde x \leftarrow x_1$}\Comment{recommendation}
\While{(true)}
	\State{Compute $\sigma_n = c/n^\gamma$} \Comment{step-size}
	\State{Evaluate the gradient $g$ at $x_n$ by finite differences, averaging over $s$ samples per axis: $\forall i\in \{1,\dots,d\}, \forall j\in \{1, \dots, \frac{s}{2}\}$} \Comment{gradient estimation}
	\begin{eqnarray*}
		x_{n}^{(i,j)+}&=&x_n+u_j\sigma_ne_i\text{\ \ and\ \ }
		x_{n}^{(i,j)-}=x_n-u_j\sigma_ne_i\\
		g_{i}&=&\frac1{2\sigma_n}\sum_{j=1}^{s/2} w_j \left(f(x_{n}^{(i,j)+})-f(x_{n}^{(i,j)-})\right)
	\end{eqnarray*}
	\State{Apply $x_{n+1}=x_n-\frac{a}{n} g$} 	\Comment{next search point}
	\State{$\tilde x \leftarrow x_{n+1}$} \Comment{update recommendation}
	\State{$n\leftarrow n+1$}
\EndWhile
\end{algorithmic}
\caption{\label{algofabian}Fabian's stochastic gradient algorithm with finite differences. Fabian, in \cite{fabian}, proposes various rules for the parametrization; in the present paper, we use the following parameters. $s$ is as in Remark 5.2 in \cite{fabian}, i.e., $s$ is the minimal even number $\ge\frac{1}{2\gamma}-1$. The scales $u_i$ are $u_i=\frac{1}{i}, \forall i \in \{1,\dots,\frac{s}{2}\}$; this generalizes the choice in Example 3.3 in \cite{fabian}. The $w_i$ are computed as given in Lemma 3.1 in \cite{fabian}. $e_i$ is the $i^{th}$ vector of the standard orthonormal basis of $\R^d$.}
}
\end{algorithm}

\begin{algorithm}
{\color{black}
\begin{algorithmic}[1]
\Require{dimension $d\in \N^*$}
\Require{$A>0$, $B>0$, $\alpha>0$, $\beta>0$}
\Require{an initial $x_1\in\R^d$}
\State{$n\leftarrow 1$}
\State{$\tilde x \leftarrow x_1$}\Comment{recommendation}
\State{$\hat h\leftarrow$ identity matrix}
\While{(true)}
	\State{Compute $\sigma_n = A/n^\alpha$}\Comment{step-size}
	\For{$i=1 \mbox{ to } d$}
		\State Evaluate $g_i$ by finite differences at $x_n+\sigma_n e_i$ and $x_n-\sigma_n e_i$, averaging each evaluation over $\lceil Bn^\beta\rceil$ resamplings.
	\EndFor
		
	\For{$i=1 \mbox{ to } d$}
		\State{Evaluate $\hat h_{i,i}$ by finite differences at $x_n+\sigma_n e_i$, $x_n$ and $x_n-\sigma_n e_i$, averaging each evaluation over $\lceil Bn^\beta\rceil$ resamplings}
		\For{$j=1 \mbox{ to } d$, $j\neq i$}
			\State{Evaluate $\hat h_{i,j}$ by finite differences thanks to evaluations at each of $x_n\pm \sigma_n e_i\pm \sigma_n e_j$, averaging over $\lceil Bn^\beta/10\rceil$ resamplings} 
		\EndFor
	\EndFor
	\State{$\delta\leftarrow$ solution of $\hat h\delta=-g$} \Comment{possible next search point}
	\If{$\|\delta\|>\frac12\sigma_n$}
		\State{$\delta = \frac12\sigma_n\frac{\delta}{\|\delta\|}$}\Comment{trust region style}
	\EndIf
	\State{Apply $x_{n+1}=x_n+\delta$} 
	\State{$\tilde x \leftarrow x_{n+1}$} \Comment{update recommendation}
	\State{$n\leftarrow n+1$}
\EndWhile
\end{algorithmic}
\caption{\label{algonewton}An adaptation of Newton's algorithm for noisy objective functions, with gradient and Hessian approximated by finite differences and reevaluations. The recommendations are the $x_n$'s. $e_i$ is the $i^{th}$ vector of the standard orthonormal basis of $\R^d$.}
}
\end{algorithm}

{\color{black}
\section{Summary of notations}\label{ion}

 Notations are as follows:
 \begin{eqnarray*}
	 & & \mbox{General notations:}\\
	 \E_\w & =&\mbox{ expectation with respect to random variable $\w$.} \\
	 {\color{black}\hat {\E}_{k} X}& = &\mbox{ average over $k$ independent realizations of random variable $X$.}\\
	 & &  \\
	       & & \mbox{Notation for solvers:}\\
	 x_{n} & = & \mbox{ search point used by the solver for the $n^{th}$ evaluation.}\\
 \tilde x_{n} & = & \mbox{ recommendation given by the solver after }\\
	    & &	\mbox{the $n^{th}$ evaluation.}\\
	 SR_{n}&=&\E \left( f(\tilde x_{n})-f(x^*) \right).\mbox{ \ \ \ (simple regret)}\\
	       & & \\
	       & & \mbox{Notation for AS algorithms:}\\
	 i^*&=&\mbox{ index of the solver chosen by the AS algorithm}.\\
   {\color{black}\tilde x_{i,n}} & = & \mbox{ {\color{black}recommendation given by the solver $i$ after }}\\
   & & \mbox{{\color{black}the $n^{th}$ evaluation.}}\\
	 SR_{i,n}&=&\E \left( f(\tilde x_{i,n})-f(x^*) \right).\\
   M&=&\mbox{ number of solvers in portfolio}.\\
	 \Delta_{i,n} &=& SR_{i,n}- \underset{j \in \{1,\dots,M\}}\min SR_{j,n}.\\
	 SR_{n}^{Solvers}&=&\underset{i\in \{1,\dots M\}}{\min} SR_{i,n}.\\
	 SR_{n}^{Selection}&=&\E \left( f(\tilde x_{i^{*},n})-f(x^*) \right). 
 \end{eqnarray*}
}

\end{document}